\documentclass[a4paper]{amsart}

\usepackage{latexsym, amssymb,amsmath, amsthm,amsfonts}

\newcommand{\F}{\mathbb{F}}

\newcommand{\Z}{\mathbb{Z}}
\newcommand{\kk}{\Bbbk}

\newcommand{\dd}{\mathrm{d}}

\def\GL{\operatorname{GL}}
\def\SL{\operatorname{SL}}
\def\PSL{\operatorname{PSL}}

\def\LT{\operatorname{LT}}
\def\res{\operatorname{res}}

\def\chr{\operatorname{char}}
\def\Quot{\operatorname{Quot}}

\def\Z{\mathbb{Z}}
\def\N{\mathbb{N}}

\def\rk{\operatorname{rank}}

\def\codim{\operatorname{codim}}

\def\tr{\operatorname{trace}}
\def\Jac{\operatorname{Jac}}

\newtheorem{Lemma}{Lemma}[section]
\newtheorem{Theorem}[Lemma]{Theorem}

\newtheorem{Corollary}[Lemma]{Corollary}
\newtheorem{cor}[Lemma]{Corollary}
\newtheorem{prop}[Lemma]{Proposition}

\newtheorem*{Corollary of Conjecture}{Corollary of Conjecture}

\theoremstyle{definition}

\theoremstyle{remark}
  \newtheorem{rem}[Lemma]{Remark}

\newtheoremstyle{Acknowledgments}
  {}
    {}
     {}
     {}
    {\bfseries}
    {}
     {.5em}
     {\thmname{#1}\thmnumber{ }\thmnote{ (#3)}}
\theoremstyle{Acknowledgments}
\newtheorem{ack}{Acknowledgments.}

\title{Conjugation Differential invariants of $\SL_2(\F_q)$ on trace-free matrices}

\author{Jonathan Elmer}
\address{Middlesex University\\
The Burroughs, Hendon, London\\
NW4 4BT UK}
\email{j.elmer@mdx.ac.uk}

\begin{document}

\maketitle

\begin{abstract}
Let $q = p^k$ be  prime power, let $F = \F_q$ and let $V$ denote the vector space of $2 \times 2$ matrices with coefficients in $F$ and trace zero. Let $G = \SL_2(F)$. Then $G$ acts on $V$ via conjugation. Let $\Omega =(S(V^*) \otimes \Lambda(V^*))$ be the algebra of differential forms on $V$. A minimal generating set for $\Omega^G$ in the case $q=3$ was constructed in \cite{ElmerMeyer}. In this article we construct a minimal generating set for $\Omega^G$ for all $q$. 
\end{abstract}

\section{Introduction}
\subsection{Differential invariants}\label{sec:diffinv}

Let $\kk$ be a field, $G$ a finite group and $V$ a finite-dimensional left $\kk G$-module. Let $v_1,v_2, \ldots, v_n$ be a basis of $V$ and let $x_1,x_2, \ldots, x_n$ be the corresponding dual basis of $V^*$. The action of $G$ on $V$ induces a left action of $G$ on $V^*$ defined by
\[(g \cdot f)(v) = f(g^{-1} \cdot v)\]
for $f \in V^*, g \in G$ and $v \in V$, and we may extend this, by algebra automorphisms, to an action on the $\kk$-algebra $R:=S(V^*)$. Setting $\deg(x_i)=1$ for each $i$ endows $S(V^*)$ with a grading which is respected by the $G$-action. 
The fixed points $R^G$ of this action form a graded subalgebra called the {\it algebra of invariants.}

Now let $W$ be a left $\kk G$-module with basis $w_1, w_2, \ldots, w_m$. Let $R \otimes W$ denote the free left $R$-module spanned by this basis. $G$ acts on $R \otimes W$ via the formula
\[g \cdot (f \otimes w)  = (g \cdot f) \otimes (g \cdot w)\] for $f \in R$, $w \in W$, and $g \in G$, and one may now show that the fixed points $(R \otimes W)^G$ of this action form an $R^G$-module. This is called the {\it module of $W$-covariants.} Since $R$ is finitely generated over $R^G$ in general, and $R^G$ is Noetherian, it follows that $(R \otimes W)^G \subseteq (R \otimes W)$ is a finitely generated $R^G$-module.

Consider the special case $\Omega^i:= R \otimes (\Lambda^i(V^*))$. This is called the module of differential $i$-forms on $V$. To explain the terminology, write $\dd x_1,\dd x_2, \ldots, \dd x_n$ for the $R$-basis of $\Omega^1$, and observe that the map $\dd: R \rightarrow \Omega^1$ defined by
\begin{equation}\label{differential}
\dd f: = \sum_{j=1}^n \frac{\partial f}{\partial x_j} \dd x_j
\end{equation}
is $G$-equivariant. The exterior product $\wedge: \Lambda^i(V^*) \times \Lambda^{j}(V^*) \rightarrow \Lambda^{i+j}(V^*)$ can be extended $R$-linearly to an exterior product on $\Omega:= \bigoplus_{i=0}^n \Omega^i = S(V^*) \otimes \Lambda(V^*)$ (note that $\Lambda^i(V^*) = 0$ for $i>n)$ which makes $\Omega$ into a non-commutative $R$-algebra. This algebra has a natural bigrading which is respected by the $G$-action. The fixed points $\Omega^G$ form a bigraded subalgebra of $\Omega$ called the algebra of {\it differential invariants.} 

\begin{rem}
Another way of viewing $\Omega$ is as follows: write $y_i:= \dd x_i$ and define a $\N$-grading on $\Omega$ by setting $\deg(x_i)=2$ and $\deg(y_i)=1$ for all $i=1, \ldots, n$. Then $\Omega$ is a commutative-graded $\kk$-algebra, i.e. for each homogeneous pair $f_1, f_2 \in \Omega$ we have
\[f_1f_2 = (-1)^{\deg(f_1)\deg(f_2)}f_2f_1.\] Although this is the natural setting for applications to cohomology theory, we will not use this convention in this article.
\end{rem}

\subsection{Background}

Rings of invariants and modules of covariants are widely studied in the non-modular case, i.e. when  the characteristic of  $\kk$ does not divide $|G|$. Perhaps the most important result is that of Chevalley and Shepherd-Todd \cite{Chevalley}, which states that $R^G$ is a polynomial ring if and only if $(R \otimes W)^G$ is a free $R^G$-algebra for each $W$, if and only if $G$ acts on $V$ as a reflection group. Another important result, due to Eagon and Hochster \cite{EagonHochster} states that $(R \otimes W)^G$ is always a Cohen-Macaulay $R^G$-module, and in particular, $R^G$ is always a Cohen-Macaulay ring.

Rings of differential invariants are also widely studied in the non-modular case. One reason for this is as follows: let $G$ be a finite group of order divisible by $p$ and let $P$ denote a Sylow-$p$-subgroup. Suppose $P$ is elementary abelian of rank $n$ and $P$ is normal in $G$. Then 
\[H(G,\F_p) \cong \Omega^{G/P},\]
see \cite[Section~5.1]{Benson}.
In the above, $V = \langle v_1,v_2, \ldots, v_n \rangle$ where $\{v_1, v_2, \ldots, v_n\}$ is a basis for $P$ as an $\F_p$-vector space and $G/P$ acts by conjugation.
The most important result in this area is Solomon's theorem \cite[Theorem~7.3.1]{Benson}, which states that if $G$ acts on $V$ as a reflection group (so $R^G = \kk[f_1, \ldots, f_n]$ for some $f_1, \ldots, f_n \in R$), then
\[\Omega^G = \kk[f_1, \ldots, f_n] \otimes \Lambda (\dd f_1, \ldots, \dd f_n).\]
Note that in addition it is a clear consequence of Eagon and Hochster's theorem that $\Omega^G$ is a Cohen-Macaulay ring, since $\Lambda^*(V^*)$ is a finite-dimensional vector space.

In the modular case, rather less is known in general. For algebras of invariants, it is known by a theorem of Serre that if $R^G$ is a polynomial ring then $G$ must act by reflections \cite[Theorem~3.7.8]{DerksenKemper}, but the converse is not true in general \cite{KemperOnCM}. In addition, $R^G$ is not Cohen-Macaulay in general, but rather it is known that if $R^G$ is a Cohen-Macaulay ring and $G$ is a $p$-group, then $G$ must act by bireflections \cite{KemperOnCM}. In the other direction, Ellingsrud and Skjelbred \cite{Ellingsrud} proved that if $\codim_V(V^G) \leq 2$ then $R^G$ is Cohen-Macaulay. This was extended by Broer and Chuai \cite{BroerChuaiRelative}, who proved that under that same hypothesis $(R \otimes W)^G$ is a Cohen-Macaulay $R^G$-module for every choice of $W$.

In the modular case, Hartmann \cite{Hartmann} proved that the analogue of Solomon's theorem holds if and only if $R^G$ is a polynomial algebra and, in addition, $G$ contains no transvections. Hartmann and Shepler \cite{HartmannShepler} later studied the case where $G$ contains transvections but $R^G$ is still polynomial, and found that in many cases $\Omega^G$ is a free $R^G$-module and has a ``twisted'' free exterior algebra structure. 

The author \cite{ElmerCMCov} computed minimal generating sets for $\Omega^G$ in the case where $G$ is cyclic of order $p$ and $V$ is indecomposable of dimension at most 3. In the dimension 2 case, $R^G$ is polynomial and $\Omega^G$ is a free twisted exterior algebra. In the dimension 3 case, $R^G$ is Cohen-Macaulay and $\Omega^G$ is a Cohen-Macaulay $R^G$ module as expected, but the structure of $\Omega^G$ as a $\kk$-algebra is more complicated.

Modular differential invariants can be useful in cohomology calculations. For example, let $F = \F_p$ where $p>2$ and let $V$ denote the vector space of $2 \times 2$ matrices with coefficients in $F$ and trace zero. Then $G:=\SL_2(F)$ acts on $V$ via conjugation.  Recent work of Meyer \cite{Meyer3} has shown that, for all $n \geq 2$, $H(\SL_2(\Z/p^n), F)$ is closely connected to $\Omega^{\SL_2(F)}$. In particular if $p \geq 3$ we have we have, for all $n \geq 2$, $H(K_n, F) \cong \Omega$ as $FG$-modules and if $p=3$ then
\[H^*(\SL_2(\Z/p^n), F) = \{x \in H^*(P_n, F): \res^{P_n}_{K_n} \in \Omega^G\}\]
where $P_n$ is a Sylow-$p$-subgroup of $\SL_2(\Z/p^n)$ and $K_n$ is the kernel of the map $\SL_2(\Z/p^n) \rightarrow \SL_2(\Z/p)$ induced by reduction modulo $p$. 

\subsection{Notation and results}\label{sec:not}

From this point onwards we fix a prime $p$, a prime power $q=p^k$, and set $F:= \F_q$. $F^*$ denotes the non-zero elements of $F$. We let $V$ denote the vector space of $2 \times 2$ traceless matrices over $F$. $SL_2(F)$ acts by conjugation on $V$ with kernel  $K:= \{\pm I \},$ where $I$ is the $2 \times 2$ identity matrix. We set
\[G:= \SL_2(F)/K = \PSL_2(F).\]

The goal of the present article is to construct a minimal generating set for $\Omega^G$. Along the way it is necessary to describe the polynomial invariants $R^G$:

\begin{Theorem}\label{invars}\
\begin{enumerate}
\item[(a)] Suppose $q$ is even. Then $R^G$ is a polynomial ring
\[R^G = F[f_1,f_2,f_3].\]
\item[(b)] Suppose $q$ is odd. Then $R^G$ is a hypersurface. More precisely
\[\mathcal{A}:= F[f_1,f_2,f_3]\]
is a homogeneous system of parameters for $R^G$, and $R^G$ is freely generated over $\mathcal{A}$ by $1$ and $f_4$.
\end{enumerate}
In the above $f_1$ is the determinant of a generic tracefree matrix, $f_2$ its first Steenrod square, $f_3$ is an invariant of degree $\frac12 q(q-1)$ which we define in Section \ref{sec:inv}, and $f_4 = \Jac(f_1,f_2,f_3)$.
\end{Theorem}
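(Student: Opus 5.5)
The natural tool is the degree criterion for polynomial invariant rings and hypersurfaces: if $f_1,f_2,f_3$ are homogeneous invariants forming a homogeneous system of parameters for $R=S(V^*)$, then $R^G$ is free over $\mathcal{A}=F[f_1,f_2,f_3]$ of rank $\deg(f_1)\deg(f_2)\deg(f_3)/|G|$ (see \cite[Theorem~3.7.5]{DerksenKemper}). This holds because $R^G$ is Cohen--Macaulay by Ellingsrud--Skjelbred, since $\dim V=3$ forces $\codim V^G\le 3$. Actually the relevant result is that invariants of a three-dimensional representation are always Cohen--Macaulay. I will compute this rank and then identify the generators.

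The degrees are $\deg f_1=2$ and $\deg f_2=q+1$; the latter is the Steenrod operation $\mathcal{P}^1$, which raises degree by $q-1$. Also $\deg f_3=\frac12 q(q-1)$. Hence
\[
\prod \deg f_i = q(q-1)(q+1).
\]
On the other hand $|\PSL_2(F)|=q(q^2-1)/\gcd(2,q-1)$. So the rank is $1$ when $q$ is even and $2$ when $q$ is odd.

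The steps are as follows.
\begin{enumerate}
\item Verify that $f_1,f_2,f_3$ are invariant. For $f_1$ this is clear. For $f_2$ it follows because Steenrod operations commute with the $\GL(V)$-action. For $f_3$ it comes from its construction in Section~\ref{sec:inv}, presumably as an orbit product or a norm of a suitable linear form.
\item Show that they form an hsop, i.e.\ that their common zero locus in $V\otimes\overline{F}$ is $\{0\}$. Concretely, $\det(A)=0$ means $A$ is nilpotent. The condition $f_2(A)=0$ should say that $A$ and its Frobenius twist $A^{(q)}$ satisfy a bilinear relation; for nilpotent $A$ this forces the kernel line of $A$ to be $F$-rational. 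Then $f_3$, built from rational nilpotents, should be nonzero unless $A=0$.
\end{enumerate}
In the even case, a rank of $1$ gives $R^G=\mathcal{A}$, which is part (a).

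In the odd case $R^G=\mathcal{A}\oplus\mathcal{A}h$ for a single secondary invariant $h$ of degree
\[
\sum\deg f_i - 3 - \deg(\text{a certain character correction}).
\]
To handle this, I will show that $f_4=\Jac(f_1,f_2,f_3)$ is a $G$-invariant rather than merely a $\det$-semi-invariant. This holds because $G=\PSL_2$ is perfect for $q>3$, so $\det$ is trivial on it; for $q=3$ the check is direct. Next I will show that $f_4\notin\mathcal{A}$ and that $f_4\neq 0$. For nonvanishing I would evaluate at a point, or use the standard fact that the Jacobian of an hsop is nonzero when the $f_i$ are algebraically independent and separable in each variable. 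Then $\deg f_4=\sum\deg f_i-3$. A Hilbert series comparison with $R^G$ free of rank $2$ forces the secondary generator to have exactly this degree, using the formula $\sum(\deg f_i -1)$ minus the number of reflections counted appropriately. Here $G$ contains no reflections in odd characteristic, since $-1$ acts trivially. This shows that $f_4$ is the secondary generator, provided it lies outside $\mathcal{A}$.

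The main obstacle is twofold: the hsop verification for $f_3$, which depends on its explicit definition, and proving that $f_4\notin\mathfrak{m}_{\mathcal{A}}R^G$, i.e.\ that it is a genuine new generator. For the latter I would try the Jacobian criterion: if $f_4$ were in $\mathcal{A}$, then the extension $\Quot(R)/\Quot(\mathcal{A})$ would be controlled by the ramification/different. Since $R^G$ is a hypersurface with no pseudo-reflections, the different of $R$ over $\mathcal{A}$ equals $f_4$ up to scalar and must be divisible by the different of $R^G$ over $\mathcal{A}$. That different is $\partial h$ for the secondary $h$, so comparing degrees identifies $h$ with $f_4$ up to $\mathcal{A}$-multiples.
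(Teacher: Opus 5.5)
Your plan is correct in outline, but it takes a genuinely different route from the paper. The paper does not argue Theorem \ref{invars} directly. It imports from Maithani (Proposition \ref{preinv}) the hsop property, the equality $R^G=\mathcal{A}$ for even $q$, and the decomposition $R^G=\mathcal{A}\oplus\mathcal{A}f_4$ for odd $q$. Its only own contribution is, for odd $q$, the explicit choice $f_3=\prod_{(b,c)\in L}(x_1+bx_2+cx_3)$. It checks that $f_3^2=\tilde{f_3}$, because $\Phi$ is two-to-one onto $L$, and that $f_3$ is invariant, because $\PSL_2(F)$ has no nontrivial character into $F^*$.

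You instead derive everything from general theory:
\begin{itemize}
\item Cohen--Macaulayness in dimension $3$;
\item the rank formula $\prod\deg f_i/|G|$, which explains conceptually why the rank is $1$ or $2$;
\item a nullcone computation for the hsop;
\item the ramification formula for the Hilbert series, giving $\deg h=\sum\deg f_i-3$ from the absence of reflections;
\item transitivity of the Dedekind different, to identify $h$ with the Jacobian.
\end{itemize}
This costs heavier machinery. In return the result no longer depends on Maithani's computations, and it uses exactly the facts (no reflections, $s$-invariants) that the paper needs later anyway.

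A few justifications need repair.
\begin{itemize}
\item \emph{No reflections for odd $q$.} This has nothing to do with $-1$ acting trivially. Since $\rho(G)\subseteq\SL_3(F)$, a reflection would be a transvection. It would then be conjugate to some $\rho(\tau_e)$, and $\rho(\tau_e)-I$ has rank $2$ because $2e\neq 0$ (Lemma \ref{norefs}(b)).
\item \emph{$f_4\neq 0$.} This should come from the Jacobian criterion. $\Quot(R)$ is Galois over $\Quot(R^G)$, which has degree $2$ over $\Quot(\mathcal{A})$ with $p$ odd, so the whole extension is separable. ``Separable in each variable'' is not the right condition.
\item \emph{The hsop check.} The linear factors of $\tilde{f_3}$ are not ``rational nilpotents''. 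The real check: for $(s:t)\in\mathbb{P}^1(F)$ with $t\neq0$, the factor indexed by $(\alpha,\beta)$ takes the value $\frac{t^2}{\beta^2}\,g(\alpha-\beta s/t)$ at $(s^2,st,t^2)$. This is nonzero because $g$ is irreducible, and the value is $s^2\neq0$ if $t=0$.
\item \emph{Cohen--Macaulayness.} Ellingsrud--Skjelbred applies via $\codim V^P=2$, not via $V^G$.
\item \emph{The different step.} Transitivity alone gives $P'(h)\mid f_4$ in $R$, where $P(T)=T^2+aT+b$ is the graded minimal polynomial of $h$. Since $\deg(2h+a)=\deg h=\deg f_4$ and $f_4\neq0$, this forces $f_4=c(2h+a)$ with $c\in F^*$. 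Hence $f_4\notin\mathcal{A}$, as $p\neq2$. Compare differents at height-one primes of $\mathcal{A}$, where the Dedekind different of the complete intersection $R$ over $\mathcal{A}$ is generated by the Jacobian.
\end{itemize}
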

This result was originally proved by Anghel \cite{Anghel}, and partially simplified by Smith \cite{Smith2x2} in the odd case. A more thorough treatment was given by Maithani \cite{Maithani} but their description of $f_3$ for odd $p$ is not convenient for our purpose. We give an alternative description of $f_3$ for odd $p$ and show that our results are equivalent in Section \ref{sec:odd}.

Our main result for even $q$ is as follows:

\begin{Theorem}\label{even}\
\begin{itemize}
\item[(a)] Suppose $q>2$ is even. Then $\Omega^G$ is  free $R^G$ module generated by
\begin{align*}
&\{1, \\
&\dd f_1, \dd f_2, \omega, \\
&f_2^{-1}\dd f_1 \wedge \dd f_2, f_2^{-1}\dd f_1 \wedge \omega,f_2^{-1}\dd f_2 \wedge \omega\\
&\dd x_1 \wedge \dd x_2 \wedge \dd x_3\}.
\end{align*} where $\omega \in (\Omega^1)^G$ has polynomial degree $q+1$ and is defined in Section \ref{sec:even}.

\item[(b)] Suppose $q=2$. Then $\Omega^G$ is  free $R^G$ module generated by
\begin{align*}
&\{1, \\
&\dd f_1, \dd f_2, \dd f_3, \\
&f_2^{-1}\dd f_1 \wedge \dd f_2, \dd f_1 \wedge \dd f_3, \dd f_2 \wedge \dd f_3\\
&\dd x_1 \wedge \dd x_2 \wedge \dd x_3\}.
\end{align*} 
\end{itemize}
\end{Theorem}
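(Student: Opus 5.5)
We will use the standard criterion for freeness of modules of covariants over a polynomial ring of invariants. When $q$ is even, Theorem \ref{invars}(a) says $R^G = F[f_1,f_2,f_3]$ is polynomial. Each $\Omega^i$ is a free $R$-module of rank $\binom{3}{i}$, and $R$ is free over $R^G$ of rank $|G|$. Hence $(\Omega^i)^G$ is a reflexive, finitely generated module over a $3$-dimensional polynomial ring.

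The plan is to use the following criterion, in the style of Hartmann and Broer--Chuai. Let $h_1,\dots,h_m \in (\Omega^i)^G$ be homogeneous and $R$-linearly independent, with $m = \binom{3}{i}$. Suppose $h_1\wedge\dots$, or more precisely the determinant of their coefficient matrix with respect to the basis $\dd x_{j_1}\wedge\cdots\wedge \dd x_{j_i}$, is a nonzero scalar multiple of a product whose degree is minimal. Then the $h_j$ form an $R^G$-basis of $(\Omega^i)^G$ exactly when
\[\sum_j \deg(h_j) = \binom{2}{i-1}\cdot \deg(\text{Jacobian-type bound}).\]
Concretely, I would instead use the Hilbert series criterion. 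A set of homogeneous invariants that is linearly independent over $R^G$, and whose degrees give the Hilbert series of $(\Omega^i)^G$, must be a basis, provided that its $R^G$-span is saturated. Saturation means that no $R^G$-combination becomes divisible by a non-unit invariant.

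The steps are as follows.

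(1) Compute the Hilbert series of $(\Omega^i)^G$. I would do this via the known Hilbert series of $R^G$, together with the product formula $\sum_j \deg(h_j) = \deg$ of the relevant $\binom{3}{i}\times\binom{3}{i}$ minor. The latter comes from the Jacobian-type criterion: for a free module of covariants over a polynomial invariant ring,
\[\det(\text{coefficient matrix}) \doteq \prod_{H} \ell_H^{a_H}.\]
Here the product runs over reflecting hyperplanes, weighted by data of the pointwise stabilisers (cf.\ Hartmann--Shepler for transvection groups).

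(2) Handle the extreme degrees directly. For $i=0$ and $i=3$ this is trivial: $\dd x_1\wedge\dd x_2\wedge\dd x_3$ is invariant because $G$ acts with determinant $1$, and $\Omega^3 = R\,\dd x_1\wedge\dd x_2\wedge\dd x_3$.

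(3) For $i=1$, verify the following. Check that $\dd f_1, \dd f_2, \omega$ (respectively $\dd f_3$ when $q=2$) are invariant; $\omega$ is constructed in Section \ref{sec:even}. Then compute the $3\times 3$ determinant of their coefficients. Show that it equals, up to a scalar, the product of linear forms defining the reflecting hyperplanes to the required powers. It should have degree $(1)+(q)+(q+1) - 3$ in polynomial degree, matching the sum needed by step (1). In $\Omega^1$ we have $\deg \dd f_2 = \deg f_2 - 1$, and the key point is that $\omega$ replaces $\dd f_3$, which has too large a degree $\tfrac12 q(q-1)-1$ when $q>2$. When $q=2$ these degrees coincide.

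(4) For $i=2$, use the duality $\Omega^2 \cong \Omega^1{}^\vee \otimes \Omega^3$ given by the wedge pairing. Dividing by $f_2$ is legitimate because $\dd f_1\wedge\dd f_2$ and the other products vanish on $f_2=0$. To see this, note that $f_2$, the Steenrod square of $f_1$, is $x$-linear in the right sense: $f_2$ is essentially a product of linear forms whose zero loci are where the differentials become dependent. Then repeat the determinant-degree check.

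I expect the main obstacle to be step (3), together with its counterpart in step (4). Specifically, the hard part is proving that the determinant of $\dd f_1, \dd f_2, \omega$ is exactly the right product up to a unit, so that no smaller-degree invariant is missed. I would first try computing $\dd f_1\wedge \dd f_2\wedge \omega$ explicitly as a multiple of $\dd x_1\wedge\dd x_2\wedge\dd x_3$, and compare it with $f_2$ times the Jacobian-related product of transvection hyperplanes. The freeness criterion then concludes the argument.
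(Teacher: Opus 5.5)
\textbf{Verdict: same strategy as the paper, but with a genuine gap.} Your route (check invariance, compute the top wedge, compare it with a product of reflecting-hyperplane forms via a Hanson--Shepler-type criterion) is the paper's. However, you never pin down the exponents that make the criterion work, and what you do state is wrong where it matters.

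\textbf{Step (3).} For $1$-forms the criterion (Proposition \ref{1formcriterion}) requires $\dd f_1\wedge\dd f_2\wedge\omega = Q_{\det}\tilde Q\,\dd x_1\wedge\dd x_2\wedge\dd x_3$, with $\tilde Q=\prod_U l_U^{b_U}$ and $b_U$ the dimension of the transvection root space. Here the $q+1$ reflecting hyperplanes are $\ker(cx_1+bx_3)$, so $Q=f_2$ and $Q_{\det}=1$. Moreover $b_U=2$ for $q>2$ but $b_U=1$ for $q=2$, so the target is $f_2^2$ (degree $2q+2$), respectively $f_2$ (degree $3$). Your target $(1)+(q)+(q+1)-3=2q-1$ subtracts $3$ twice: the numbers $1,q,q+1$ are already coefficient degrees. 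Your explanation of why $q=2$ is special is also wrong: $\deg\dd f_3=\deg\omega$ happens at $q=4$. What changes at $q=2$ is $b_U$. That is why $\dd f_3$ (degree $0$) is a generator there, with $\dd f_1\wedge\dd f_2\wedge\dd f_3=f_2\,\dd x_1\wedge\dd x_2\wedge\dd x_3$. Note also that reflexivity over a $3$-dimensional polynomial ring does not give freeness. You offer no independent way to get the Hilbert series of $(\Omega^i)^G$ in this modular setting, so everything rests on the criterion with the correct exponents.

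\textbf{Step (4), the more serious gap.}
\begin{itemize}
\item Your heuristic is false. Vanishing of the $3\times3$ determinant on $\{f_2=0\}$ only says the three $1$-forms are pointwise dependent there (rank $\le 2$). Divisibility of a \emph{pairwise} wedge by $f_2$ needs its $2\times 2$ minors to vanish there (rank $\le 1$). This fails for $q=2$: there $\dd f_1\wedge\dd f_3$ and $\dd f_2\wedge\dd f_3$ are nonzero of polynomial degrees $1$ and $2<3=\deg f_2$. That is exactly why part (b) leaves them undivided.
\item The missing idea is maximality of the transvection root spaces. This is the hypothesis of Hanson--Shepler's Theorem~4.6, which the paper invokes. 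For $u\in U$ and a transvection $g$ about $U$ with root vector $v_g$, invariance of a $1$-form $\eta$ gives $\eta(u)=g\cdot\eta(u)$. This forces $\eta(u)(v_g)=0$. When the $v_g$ span $U$ (i.e.\ $q>2$), $\eta(u)$ must be a multiple of $l_U$. Hence any wedge of two invariant $1$-forms vanishes on each $U$, so it is divisible by $\prod_U l_U=f_2$.
\item Freeness of the $2$-forms then needs the \emph{derivation} criterion, Proposition \ref{derivcriterion}, since $\Lambda^2V^*\cong V$. Its target is $Q=f_2$, i.e.\ degree sum $|\mathcal U|=q+1$, together with $\Quot(R)$-independence inherited from the $1$-forms.
\item This holds for $f_2^{-1}\dd f_1\wedge\dd f_2$, $f_2^{-1}\dd f_1\wedge\omega$, $f_2^{-1}\dd f_2\wedge\omega$ (degrees $0,1,q$). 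It also holds for the $q=2$ set (degrees $0,1,2$).
\end{itemize}
``Repeat the determinant-degree check'' without this target establishes neither case.
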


\begin{Corollary}\label{evenmin}\
\begin{itemize}
\item[(a)] Suppose $q>2$ is even. Then a minimal generating set for $\Omega^G$ is  given by
\begin{align*}
&\{f_1, f_2, f_3, \\
&\dd f_1, \dd f_2, \omega, \\
&f_2^{-1}\dd f_1 \wedge \dd f_2, f_2^{-1}\dd f_1 \wedge \omega,f_2^{-1}\dd f_2 \wedge \omega\\
&\dd x_1 \wedge \dd x_2 \wedge \dd x_3\}.
\end{align*}

\item[(b)] Suppose $q=2$. Then a minimal generating set for $\Omega^G$ is  given by
\begin{align*}
&\{f_1, f_2, f_3, \\&\dd f_1, \dd f_2, \dd f_3, \\
&f_2^{-1}\dd f_1 \wedge \dd f_2\}.
\end{align*} 
\end{itemize}
\end{Corollary}

We prove these results in Section \ref{sec:even}. As $G$ is a reflection group, our methods depend heavily on the work of Hartmann, Shepler, and Hanson \cite{HartmannShepler, HansonShepler} on differential invariants of reflection groups. We outline the results we need in Section \ref{sec:refs}.

In order to state our main result for odd $q$, recall that in this case we have an isomorphism $V \cong V^*$ (this is made explicit in Section \ref{sec:odd}). This induces an isomorphism $\phi:(\Omega^2)^G \rightarrow (\Omega^1)^G$.

\begin{Theorem}\label{odd}
Suppose $q$ is odd. Then $\Omega^G$ is a Cohen-Macaulay $R^G$-module, freely generated over $\mathcal{A}$ by
\begin{align*} S =\{&1,f_4,\\ &\dd f_1, \dd f_2, \dd f_3, \phi(\dd f_2 \wedge \dd f_3), \phi(\dd f_3 \wedge \dd f_1), \phi(\dd f_1 \wedge \dd f_2),\\
&\phi^{-1}(\dd f_1), \phi^{-1}(\dd f_2), \phi^{-1}(\dd f_3), \dd f_2 \wedge \dd f_3, \dd f_3 \wedge \dd f_1, \dd f_1 \wedge \dd f_2,\\
& \dd x_1 \wedge \dd x_2 \wedge \dd x_3, f_4 \dd x_1 \wedge \dd x_2 \wedge \dd x_3 \}.\end{align*}
\end{Theorem}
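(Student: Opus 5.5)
The plan is to work degree by degree in the exterior grading, using the standard Hilbert series criterion for freeness over a homogeneous system of parameters. Since $\mathcal{A}=F[f_1,f_2,f_3]$ is a polynomial ring over which $R$ is finite, $\Omega$ is a finitely generated $\mathcal{A}$-module. The criterion I will use is this: suppose the elements of $S$ lying in $\Omega^i$ are linearly independent over $\mathcal{A}$, and the Hilbert series of their span equals that of $(\Omega^i)^G$. Then they freely generate $(\Omega^i)^G$ over $\mathcal{A}$. Freeness over an hsop of $R^G$ then gives Cohen-Macaulayness as an $R^G$-module.

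The degrees $i=0$ and $i=3$ are immediate. For $i=0$ this is Theorem \ref{invars}(b). For $i=3$ we have $\Lambda^3(V^*)$ spanned by $\dd x_1\wedge\dd x_2\wedge\dd x_3$, on which $G=\PSL_2(F)$ acts by the determinant. Conjugation has determinant $1$, so $(\Omega^3)^G=R^G\,\dd x_1\wedge \dd x_2\wedge \dd x_3$, which is free over $\mathcal{A}$ on the two listed elements.

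For $i=1,2$ I will use the isomorphism $V\cong V^*$ given by the trace form. This identifies $\Omega^1\cong\Omega^2$ as $RG$-modules via $\phi$, so it suffices to treat $(\Omega^1)^G$ with the six candidate generators
\[\dd f_1,\ \dd f_2,\ \dd f_3,\ \phi(\dd f_2\wedge\dd f_3),\ \phi(\dd f_3\wedge\dd f_1),\ \phi(\dd f_1\wedge\dd f_2).\]
The statement for $i=2$ then follows by applying $\phi^{-1}$.

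\emph{Independence.} I will wedge candidate generators together and compare with $f_4$. We have $\dd f_1\wedge\dd f_2\wedge\dd f_3=f_4\,\dd x_1\wedge\dd x_2\wedge\dd x_3$ and $f_4\neq0$, so the $\dd f_j$ are independent over $\operatorname{Quot}(R)$. Writing $\phi$ through the bilinear form, the pairing $\dd f_j\wedge\phi(\dd f_k\wedge\dd f_l)$ should be a nonzero multiple of $\dd f_1\wedge \dd f_2\wedge \dd f_3$ when $\{j,k,l\}=\{1,2,3\}$. More generally I expect the $3\times 3$ matrix of pairings to be essentially the Gram matrix $\langle\nabla f_j,\nabla f_k\rangle$ times $f_4$. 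Exploiting this, the claim is that an $\mathcal{A}$-linear relation among the six forms forces coefficients divisible by $f_4$ in $R^G$. Since $R^G=\mathcal{A}\oplus\mathcal{A}f_4$ and $f_4^2\in\mathcal{A}$ has positive degree, a degree (Nakayama-style) descent then kills the relation.

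\emph{Hilbert series.} The span of the six forms has Hilbert series
\[\frac{\sum_j t^{d_j-1}+\sum_{j<k}t^{d_j+d_k-1}}{\prod_j(1-t^{d_j})},\]
where $d_1=2$, $d_2=p+1$ (more precisely $q+1$) and $d_3=q(q-1)/2$. I will compute the Hilbert series of $(\Omega^1)^G=(R\otimes V^*)^G$ independently and compare. The first approach I would try is to restrict to the Borel subgroup $B$ (upper triangular matrices), whose index $q+1$ is prime to $p$. For $B$, I would compute $(R\otimes V^*)^U$ explicitly for the Sylow subgroup $U$, then take $T$-invariants, and recover the $G$-invariants using transfer and the Mackey formula. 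An alternative is to check that the six forms generate after inverting a suitable element (a localization argument), and to use the degree sum $\sum(\text{degrees of generators})$ together with the index bound $[\Quot(R):\Quot(R)^G]=|G|$, as in the rank/degree criterion of Hartmann and Shepler recalled in Section \ref{sec:refs}.

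\textbf{Main obstacle.} I expect the hardest step to be this Hilbert series comparison, or equivalently the degree criterion, for $(\Omega^1)^G$. The relevant multiplicity count is that a free $\mathcal{A}$-module of rank $6=2\cdot 3$ must have generator degrees summing to the right value. Concretely, I would verify the identity
\[\sum_j(d_j-1)+\sum_{j<k}(d_j+d_k-1)=\deg f_4\cdot 3+\text{const},\]
which is what independence together with the correct rank forces in order to achieve generation.
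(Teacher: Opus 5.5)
Your proposal has a genuine gap, and it sits exactly where you put the main obstacle: nothing in the plan shows that the six $1$-forms \emph{generate} $(\Omega^1)^G$.

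\emph{Generation.} The Hilbert series comparison needs $H((\Omega^1)^G,t)$, and none of your suggested routes produces it. There is no Molien formula in the modular case. The relative transfer from the Borel subgroup (index $q+1$, prime to $p$) only makes $(R\otimes V^*)^G$ a direct summand of $(R\otimes V^*)^B$, which gives no formula for its Hilbert series. The Hanson--Shepler criterion (Proposition \ref{1formcriterion}) does not apply either: it describes free bases of $n$ elements over $R^G$ for reflection groups. Here $G$ has no reflections (Lemma \ref{norefs}), $R^G$ is a hypersurface, and $(\Omega^1)^G$ is not free over $R^G$.

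The degree-sum criterion you allude to at the end (Proposition \ref{freetest}) is the right tool, but it needs two inputs you do not have.
\begin{enumerate}
\item $(\Omega^1)^G$ must already be known to be free over $\mathcal{A}$. You intended to deduce Cohen--Macaulayness from freeness, so using it here would be circular. The paper gets it a priori from Broer--Chuai: since $\dim V^P=1$, i.e.\ $\codim V^P=2$, each $(R\otimes W_i)^P$ is Cohen--Macaulay over $R^P$. This passes to $(R\otimes W_i)^G$ over $R^G$, so $(\Omega^i)^G$ is free over the homogeneous system of parameters $\mathcal{A}$.
\item You need the exact target value
$s((\Omega^1)^G,\mathcal{A})=3\,s(R^G,\mathcal{A})+2\,s((\Omega^1)^G,R^G)=3\deg f_4+0=\frac32q(q+1)$.
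The vanishing of $s((\Omega^1)^G,R^G)$ is Broer--Chuai's theorem applied to a group without reflections.
\end{enumerate}
Your identity ``$=3\deg f_4+\text{const}$'' leaves precisely this constant open. Your bookkeeping is also off: $\phi(\dd f_j\wedge\dd f_k)$ has polynomial degree $d_j+d_k-2$, not $d_j+d_k-1$. With your exponents the sum would be $3\deg f_4+3$, which Proposition \ref{freetest} would read as \emph{non}-generation. With the correct exponents it is exactly $3\deg f_4$.

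\emph{Independence.} This step also needs repair. $\dd f_j\wedge\phi(\dd f_k\wedge\dd f_l)$ is a $2$-form, so it cannot be a multiple of $\dd f_1\wedge\dd f_2\wedge\dd f_3$. What is true is this: for the invariant form $B$ on $V^*$ induced by the trace form, $B(\dd f_j,\phi(\dd f_k\wedge\dd f_l))$ is a fixed nonzero constant times the determinant of the coefficient vectors of $\dd f_j,\dd f_k,\dd f_l$. So it equals $\pm\lambda f_4$ or $0$. Pairing a relation $\sum_j a_j\dd f_j+\sum_k a_{k+3}\phi(c_k)=0$ with $\dd f_k$ gives $\sum_j a_jB(\dd f_j,\dd f_k)\pm\lambda a_{k+3}f_4=0$.

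This kills the relation only if every Gram entry $B(\dd f_j,\dd f_k)\in R^G$ has zero component in $\mathcal{A}f_4$. Granting that, splitting along $R^G=\mathcal{A}\oplus\mathcal{A}f_4$ gives $a_{k+3}=0$. Then $a_j=0$ follows, because the Gram determinant is a nonzero multiple of $f_4^2$. Degree considerations settle this for every entry except $B(\dd f_3,\dd f_3)$ when $q\geq 5$, and $B(\dd f_2,\dd f_2)$ when $q=3$. For those you need an extra argument. One option is conjugation by an element of $\GL_2(F)$ of nonsquare determinant: it normalises $G$, fixes $f_1$, $f_2$ and $B$, and negates $f_3$ and $f_4$. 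Your ``divisible by $f_4$ plus Nakayama descent'' does not supply this.

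The paper sidesteps the issue with explicit lead terms (Lemma \ref{leadterms}). The $(x_1,x_2)$-bidegrees of the lead terms of the $\dd x_3$-coefficients of the six forms are distinct modulo those of lead terms of elements of $\mathcal{A}$.
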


\begin{Corollary}\label{oddmin}
Suppose $q$ is odd. A minimal generating set for $\Omega^G$ is given by
\begin{align*}\{&f_1,f_2,f_3,f_4,\\ &\dd f_1, \dd f_2, \dd f_3, \phi(\dd f_2 \wedge \dd f_3), \phi(\dd f_3 \wedge \dd f_1), \phi(\dd f_1 \wedge \dd f_2),\\
&\phi^{-1}(\dd f_1), \phi^{-1}(\dd f_2), \phi^{-1}(\dd f_3), \\
& \dd x_1 \wedge \dd x_2 \wedge \dd x_3.\}.\end{align*}
\end{Corollary}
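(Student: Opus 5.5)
We derive Corollary \ref{oddmin} from Theorem \ref{odd} and Theorem \ref{invars}(b). The plan has two parts: show that the listed set generates $\Omega^G$ as an $F$-algebra, and show that no element of it can be dropped.

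\textbf{Generation.} By Theorem \ref{odd}, $\Omega^G$ is spanned as an $\mathcal{A}$-module by $S$, and $\mathcal{A}=F[f_1,f_2,f_3]$. It therefore suffices to show that every element of $S$ lies in the subalgebra generated by the proposed set. The elements $1$, $f_4$, $\dd f_i$, $\phi(\dd f_i\wedge \dd f_j)$, $\phi^{-1}(\dd f_i)$ and $\dd x_1\wedge\dd x_2\wedge\dd x_3$ are in the set directly. The forms $\dd f_i\wedge\dd f_j$ are products of the $\dd f_i$. Finally, $f_4\,\dd x_1\wedge \dd x_2\wedge \dd x_3$ is the product of $f_4$ with the volume form.

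\textbf{Minimality.} Let $\Omega^G_+$ be the ideal of elements of positive total degree. A homogeneous generating set is minimal if and only if its images in $\Omega^G_+/(\Omega^G_+)^2$ are linearly independent. We will check independence bidegree by bidegree, writing $(a,b)$ for polynomial degree $a$ and form degree $b$.

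In form degree $0$, the set $\{f_1,f_2,f_3,f_4\}$ is a minimal generating set of $R^G$: by Theorem \ref{invars}(b), $R^G$ is a hypersurface that is free over $\mathcal{A}$ on $1,f_4$, and $f_1,f_2,f_3$ form an hsop. Each of these elements is indecomposable in $\Omega^G$ as well, since decomposable elements of form degree $0$ can only come from products of form-degree-$0$ elements.

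In form degree $1$, a decomposable element lies in $R^G_+\cdot(\Omega^1)^G$. By Theorem \ref{odd}, $(\Omega^1)^G$ is free over $\mathcal{A}$ on the nine one-forms in $S$ multiplied by $1$ and $f_4$. The relevant products with $f_4$ are already rewritten inside that free basis: $f_4$ times a basis one-form is itself part of the free basis. Hence $(\Omega^1)^G/R^G_+(\Omega^1)^G$ has the nine classes as a basis, and all nine one-forms are needed.

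In form degree $3$, $(\Omega^3)^G$ is the free rank-one $R^G$-module on the volume form $\dd x_1\wedge\dd x_2\wedge\dd x_3$. We need to check that the volume form is not a sum of products of lower-form-degree invariants. Any such product has positive polynomial degree, while the volume form has polynomial degree $0$, so it cannot be decomposable.

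\textbf{Main obstacle.} The delicate point is form degree $1$. We must make sure that none of $\phi(\dd f_i\wedge\dd f_j)$ or $\phi^{-1}(\dd f_i)$ equals an $R^G_+$-combination of the others. Freeness over $\mathcal{A}$ gives exactly this, provided we track that $R^G_+(\Omega^1)^G=\mathcal{A}_+(\Omega^1)^G+f_4(\Omega^1)^G$. The claim is that $f_4\cdot(\text{basis element})$ is a basis element only when combined with $f_4^2\in\mathcal{A}$, so $f_4(\Omega^1)^G$ is contained in the span of $\mathcal{A}_+$ and the $f_4$-multiples of the basis. This reduces to the statement that $f_4$ times a degree-$1$ basis element is never $1\cdot$ (basis element) modulo $\mathcal{A}_+$, which holds by comparing polynomial degrees, since $\deg f_4>0$.
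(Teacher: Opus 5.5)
\textbf{There is a genuine gap: you never treat form degree $2$.} Your generation argument and your treatment of form degrees $0$ and $3$ are fine. Since $\phi$ maps $(\Omega^2)^G$ to $(\Omega^1)^G$, the elements $\phi^{-1}(\dd f_1),\phi^{-1}(\dd f_2),\phi^{-1}(\dd f_3)$ are $2$-forms, not $1$-forms. The proposed set contains only six $1$-forms, $b_1,\dots,b_6$ (the $\dd f_i$ and the $\phi(\dd f_i\wedge\dd f_j)$), and $(\Omega^1)^G$ is free of rank $6$ over $\mathcal{A}$ on exactly these. Your description of ``nine one-forms multiplied by $1$ and $f_4$'', with $f_4$ times a basis element again a basis element, is not what Theorem \ref{odd} says.

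Form degree $2$ is where the real content of the corollary lies. There the decomposable elements are not only $R^G_+(\Omega^2)^G$ but also wedge products of invariant $1$-forms; modulo $R^G_+(\Omega^2)^G$ these give the $F$-span of all $b_i\wedge b_j$. This is why the $\dd f_i\wedge\dd f_j$ in $S$ are discarded. You must show that the three forms $\phi^{-1}(\dd f_i)$ do not lie in $R^G_+(\Omega^2)^G+\sum_{i,j}F\,b_i\wedge b_j$. In particular, they must not arise from wedges involving the new generators $\phi(\dd f_i\wedge\dd f_j)$.

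The paper handles this by comparing $x_1$-degrees. A total-degree count also works. Put $N=\frac12q(q-1)$. The three $2$-forms have polynomial degrees $1$, $q$ and $N-1$. Each $b_i\wedge b_j$ has degree in $\{q+1,q+2,2q+1\}$ or at least $N$, and none of these values coincide for odd $q\geq 5$. Freeness of $S$ over $\mathcal{A}$ then finishes. For $q=3$ one has $\deg(\dd f_1\wedge\dd f_3)=3=\deg\phi^{-1}(\dd f_2)$, but $\dd f_1\wedge \dd f_3$ is, up to sign, a different element of the free basis $S$, so freeness still separates them.

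\textbf{Your form-degree-$1$ argument is also too weak as stated.} The fact that $\deg f_4>0$ does not by itself stop $f_4b_i$ from having a nonzero constant coefficient on some $b_j$ of larger degree. What you need is this: in any relation $b_j=\sum_i r_ib_i$ with $r_i\in R^G_+$, each $r_i$ has degree $\deg b_j-\deg b_i<\deg f_4$, so $r_i\in\mathcal{A}_+$. This holds because $\deg f_4=\frac12q(q+1)$ exceeds $\max_j\deg b_j=\frac12q(q+1)-1$, so $R^G=\mathcal{A}$ in every degree that can occur. Freeness over $\mathcal{A}$ then gives indecomposability of each $b_j$. The same observation, that $R^G$ agrees with $\mathcal{A}$ below $\deg f_4$, is what lets you replace $R^G_+$ by $\mathcal{A}_+$ in the form-degree-$2$ argument above.
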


We prove these results for all $q \neq 3$ in Section \ref{sec:odd}. The corresponding results for $q=3$ were given in \cite{ElmerMeyer}; we verify that the statements are equivalent in Section \ref{sec:three}.

\begin{ack} The author wishes to thank Anja Meyer for sharing an application of the problem in the case $p=q$ odd to cohomology theory, and Akiyoshi Sannai for pointing out a couple of mistakes in an earlier version of this paper.
\end{ack}

\section{Covariants of Reflection Groups}\label{sec:refs}

In this section we will use the notation of Section \ref{sec:diffinv}: so $\kk$ is a field of arbitrary characteristic, $V$ a vector space over $\kk$ with dimension $n$ and $G$ a finite group acting on $V$ on the left. Let $\rho: G \rightarrow \GL_n(\kk)$ be the homomorphism associated with the representation.

Recall that $g \in G$ is said to be a {\it reflection} if $g  \neq 1$ and $g$ fixes a subspace $U$ of $V$ with codimension 1. We say $g$ is reflection about $U$ and that $U$ is the {\it reflecting hyperplane} of $g$. If $|g|$ is coprime to $\chr(\kk)$ then (possibly after extending $\kk$) one can choose a basis of $V$ with respect to which $g$ is diagonal, with all entries 1 except the last which will be a nontrivial $|g|$th root of 1 in $\kk$. We call these diagonalisable reflections. If $|G|$ is coprime to $\chr(\kk)$, all reflections are of this kind. 

If $p=\chr(\kk)| |G|$ then there may exist reflections of order $p$, which we call {\it transvections}. If in addition $\rho(G) \subseteq \SL_n(\kk)$ then all reflections are transvections.

Let $g \in G$ be a reflection about $U$. Write $U = \ker(l_U)$ for some $l_U \in V^*$, which is unique up to scalar. Then there exists some vector $v_g \in V$ (again, unique up to a scalar which depends on the choice of $l_U$) such that, for all $w \in V$ we have
\[g(w) = w + l_U(w)v_g.\]
$v_g$ is called the {\it root vector} associated to $g$, and it is easily seen that $g$ is a transvection if and only if $v_g \in U$. In the other direction, given a reflecting hyperplane $U \leq V$, one may consider the {\it root space} $R_U$ spanned by the root vectors of all reflections about $U$, and $T_U = R_U \cap U$, the {\it transvection root space} spanned by the root vectors of all transvections about $U$. We write $b_U = \dim(T_U)$. If $b_U=n-1$, so that $U=T_U$, we say the transvection root space of $U$ is {\it maximal}.

Denote by $G_U$ the stabiliser of $U$ in $G$. The transvections about $U$ along with the identity form a normal subgroup $K_U$ of $G_U$; we write $e_U = |G_U:K_U|$. Note that $K_U = \ker(\det|_{G_U})$; in particular if $\rho(G) \subseteq \SL_n(\kk)$ then $e_U=1$.

Let $\mathcal{U}$ denote the set of reflecting hyperplanes in $V$. Associated to $\mathcal{U}$ one has the following polynomials:
\[Q = \prod_{U \in \mathcal{U}} l_U; \qquad Q_{\det}= \prod_{U \in \mathcal{U}} l_U^{e_U-1}; \qquad \tilde{Q} =\prod_{U \in \mathcal{U}} l_U^{b_Ue_U}.\]

Hanson and Shepler considered covariant modules of the form $(R \otimes V^*)^G$ and $(R \otimes V)^G$, which they callled invariant differential 1-forms and invariant derivations respectively. The following is a version of Theorem 3.2(a) in \cite{HansonShepler}:

\begin{prop}\label{1formcriterion}
Let $\omega_1, \omega_2, \ldots, \omega_n \in (R \otimes V^*)^G$. The following are equivalent:
\begin{enumerate}
\item These elements generate $(R \otimes V^*)^G$ freely as an $R^G$ module;
\item $\omega_1 \wedge \omega_2 \ldots \wedge \omega_n = Q_{\det} \tilde{Q} \dd x_1 \wedge \dd x_2 \ldots \wedge \dd x_n;$
\item These elements are linearly independent over $\Quot(R)$ and $$\sum_{i=1}^n \deg(\omega_i) = \sum_{U \in \mathcal{U}}(b_Ue_U+e_U-1).$$
\end{enumerate}
\end{prop}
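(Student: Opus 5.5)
The plan is to prove the three equivalences through a single determinant computation. The anchor is the claim that for every family $\omega_1,\ldots,\omega_n$ in $(R\otimes V^*)^G$, the top form $\omega_1\wedge\cdots\wedge\omega_n$ is divisible by $Q_{\det}\tilde Q\,\dd x_1\wedge\cdots\wedge\dd x_n$. Write $\omega_i=\sum_j a_{ij}\dd x_j$, so that $\omega_1\wedge\cdots\wedge\omega_n=\det(a_{ij})\,\dd x_1\wedge\cdots\wedge\dd x_n$.

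\textbf{Divisibility.} Since the $\omega_i$ are invariant and $\dd x_1\wedge\cdots\wedge\dd x_n$ transforms by $\det^{-1}$, the polynomial $J=\det(a_{ij})$ is a $\det$-semi-invariant. Now fix $U\in\mathcal{U}$ and choose coordinates with $x_n=l_U$.
\begin{itemize}
\item For a diagonalisable reflection in $G_U$, the standard argument for semi-invariants gives that $J$ is divisible by $l_U^{e_U-1}$.
\item For a transvection $g(w)=w+l_U(w)v_g$ with $v_g\in U$, the dual action on $V^*$ fixes $l_U$. It sends each $x_j$ to $x_j+c_jl_U$, where $(c_j)$ is dual to $v_g$.
\item Invariance of $\omega_i$ then forces the components $a_{ij}$, taken in directions paired with $T_U$, to be divisible by $l_U$. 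To see this, compare $g\cdot\omega_i-\omega_i$: its $\dd$-coefficients change by multiples of $a_{in}$, while $g\cdot a_{ij}-a_{ij}\in l_U R$.
\item Running over the $b_U$ independent root directions, one gets $b_U$ columns, after a change of basis of $V^*$, divisible by $l_U$. Combining with the diagonal part of $G_U/K_U$ upgrades this to $l_U^{b_Ue_U+e_U-1}$ dividing $J$.
\end{itemize}
The distinct $l_U$ are coprime, so $Q_{\det}\tilde Q\mid J$. This local step is the main obstacle: making the exponent $b_Ue_U$ rather than merely $b_U$ requires the interplay of the order-$e_U$ semisimple part with the transvection subgroup. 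For this I would simply cite the local analysis in \cite{HansonShepler} (their Lemma preceding Theorem 3.2).

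\textbf{Existence of a free basis.} Having divisibility, I would show that some basis exists whose top wedge equals exactly $Q_{\det}\tilde Q\,\dd x_1\wedge\cdots\wedge\dd x_n$. This is where $G$ being a reflection group enters, via the theorem of \cite{HansonShepler}. Alternatively one can argue via a Saito-type criterion: if $J=cQ_{\det}\tilde Q$ with $c\in\kk^*$, then for any invariant $\eta$ Cramer's rule writes $\eta=\sum h_i\omega_i$ with $h_i=J_i/J$, where $J_i$ is the top wedge with $\eta$ replacing $\omega_i$. By divisibility $J_i$ is divisible by $Q_{\det}\tilde Q$, so $h_i\in R$. Each $h_i$ is invariant because $J_i$ and $J$ are semi-invariants with the same character. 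This proves (2)$\Rightarrow$(1).

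\textbf{The remaining implications.} For (1)$\Rightarrow$(2), note that a free basis of $(R\otimes V^*)^G$ has $J\neq0$, since the module has rank $n$ over $R^G$ after tensoring with $\Quot(R)$. Comparing with a basis satisfying (2), the change-of-basis matrix lies in $\GL_n(R^G)$, so the two determinants differ by a unit scalar, which can be normalised. For (2)$\Leftrightarrow$(3), observe that $\deg J=\sum\deg\omega_i$ and $\deg(Q_{\det}\tilde Q)=\sum_U(b_Ue_U+e_U-1)$. Linear independence over $\Quot(R)$ is equivalent to $J\neq0$. Given $J\neq0$, the divisibility established above shows that the degree equality holds exactly when $J$ is a nonzero scalar multiple of $Q_{\det}\tilde Q$, which after rescaling is (2).
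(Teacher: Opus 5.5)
The paper gives no proof of this proposition. It is quoted from Theorem~3.2(a) of \cite{HansonShepler}, so your outline has to stand on its own.

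\textbf{What works.} Assume the divisibility claim: $Q_{\det}\tilde Q$ divides the top wedge of \emph{every} family of invariant 1-forms. Granting that, three parts of your outline are correct:
\begin{itemize}
\item the $\det$-semi-invariance of $J$;
\item the Saito/Cramer argument for (2)$\Rightarrow$(1);
\item the degree comparison for (2)$\Leftrightarrow$(3).
\end{itemize}
Your transvection argument does prove divisibility when all $e_U=1$, after one correction. Since $g\,\dd x_j=\dd x_j+c_j\,\dd x_n$, only the $\dd x_n$-coefficient of $g\cdot\omega_i$ changes, and it changes by $\sum_j c_j\,g(a_{ij})$. It is not the case that the other coefficients change by multiples of $a_{in}$. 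Invariance then gives $\sum_j c_j\,g(a_{ij})=a_{in}-g(a_{in})\in l_UR$, so the pairing of $\omega_i$ with $v_g$ is divisible by $l_U$. In the paper $\rho(G)\subseteq\SL_3(F)$ forces $e_U=1$, and only (2)$\Rightarrow$(1) is ever used. So this part of your argument already covers every application in the paper. The case $e_U>1$ you defer to \cite{HansonShepler}, which is no worse than what the paper does.

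\textbf{The gap: (1)$\Rightarrow$(2).} You reduce this implication to the existence of some basis whose wedge is exactly $Q_{\det}\tilde Q\,\dd x_1\wedge\cdots\wedge\dd x_n$. For that existence you appeal to ``$G$ being a reflection group'' and to \cite{HansonShepler}. This does not work, for two reasons:
\begin{itemize}
\item The proposition has no reflection-group hypothesis.
\item Existence of such a basis is not an independent fact you can invoke. If $G$ has no reflections, (2) forces $J$ to be a nonzero constant, which is impossible unless $G$ acts trivially. So a basis satisfying (2) can only be obtained \emph{from} hypothesis (1), which is exactly the implication you are trying to prove. The step is circular.
\end{itemize}
The Cramer paragraph you offer as an alternative does not help, because it starts from a family that already satisfies (2). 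Divisibility only gives $v_h(J)\ge v_h(Q_{\det}\tilde Q)$ for each irreducible $h$. What is missing is the reverse inequality for a basis. Two ways to supply it:
\begin{itemize}
\item[(i)] Compute the degree sum independently. If the $\omega_i$ form a free basis of $M=(R\otimes V^*)^G$, then $H(M,t)/H(R^G,t)=\sum_i t^{\deg\omega_i}$, so $\sum_i\deg\omega_i=s(M,R^G)$. You must then show $s(M,R^G)=\sum_U(b_Ue_U+e_U-1)$, for instance via the ramification formula of \cite{BroerChuaiRelative}, the same tool the paper uses in the odd case. After that, divisibility and $J\neq0$ force $J\doteq Q_{\det}\tilde Q$.
\item[(ii)] Argue locally. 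Any invariant family $\eta_1,\dots,\eta_n$ satisfies $\eta_k=\sum_i h_{ki}\omega_i$ with $h_{ki}\in R^G$, so $J$ divides every such top wedge. It therefore suffices to exhibit, for each irreducible $h$, an invariant family whose wedge has $h$-adic valuation exactly $v_h(Q_{\det}\tilde Q)$.
\end{itemize}
Either route is genuine work, and it is absent from your proposal.

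Finally, (1) and (3) determine the wedge only up to a nonzero scalar. So (2) must be read as equality up to $\kk^*$, as you implicitly do when you normalise.
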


We will also need the following version of Theorem 3.2(b) in \cite{HansonShepler} providing a  similar result for $(R \otimes V)^G$: in order to formulate it properly, we let $\delta_1, \ldots, \delta_n$ be a basis of $R_0 \otimes V$ with the action the same as the action on $v_1, \ldots, v_n$ and view $R \otimes V$ as a direct summand of $R \otimes \Lambda(V)$ with the wedging operation ended $R$-linearly. 

\begin{prop}\label{derivcriterion}
Let $\psi_1, \psi_2, \ldots, \psi_n \in (R \otimes V)^G$. The following are equivalent:
\begin{enumerate}
\item These elements generate $(R \otimes V)^G$ freely as an $R^G$ module;
\item $\psi_1 \wedge \psi_2 \ldots \wedge \psi_n = Q \delta_1 \wedge \delta_2 \ldots \wedge \delta_n;$
\item These elements are linearly independent over $\Quot(R)$ and $$\sum_{i=1}^n \deg(\psi_i) = |\mathcal{U}|.$$
\end{enumerate}
\end{prop}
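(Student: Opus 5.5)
Most of the proof rests on one basic fact, namely divisibility by $Q$. Given any $\psi_1,\ldots,\psi_n \in (R\otimes V)^G$, write $\psi_i = \sum_j c_{ij}\delta_j$ with $c_{ij}\in R$, and set $J=\det(c_{ij})$, so that $\psi_1\wedge\cdots\wedge\psi_n = J\,\delta_1\wedge\cdots\wedge\delta_n$. First I will show that $Q$ divides $J$.

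Fix a reflection $g$ about $U$, and write each $\psi_i$ as a vector-valued polynomial $c_i(x)$. Invariance says $g\,c_i(g^{-1}x)=c_i(x)$. Restricting to $x\in U$, where $g^{-1}x=x$, gives $(g-1)c_i(x)=l_U(c_i(x))\,v_g=0$. Hence the linear combination $l_U(c_i)$ of the coordinates of $c_i$ vanishes on $U$, and so is divisible by the linear form $l_U$. Now change basis of $V$ (only changing $J$ by a nonzero scalar) so that $l_U$ becomes a coordinate functional. Then one column of the coefficient matrix is divisible by $l_U$, and therefore $l_U \mid J$. The $l_U$ for distinct $U$ are pairwise coprime, so $Q\mid J$.

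From this, the equivalence (2)$\Leftrightarrow$(3) is a degree count. Linear independence over $\Quot(R)$ is equivalent to $J\neq 0$, and $\deg J=\sum_i\deg\psi_i$. So if $J\ne 0$ and $\deg J=|\mathcal U|=\deg Q$, then $Q\mid J$ forces $J=cQ$ for a nonzero scalar $c$, which we absorb. Conversely, (2) gives $J\ne 0$ and the degree equality.

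For (2)$\Rightarrow$(1) I use Cramer's rule. Given $\psi\in(R\otimes V)^G$, the $\psi_i$ form a $\Quot(R)$-basis, so $\psi=\sum a_i\psi_i$ with
\[a_i=\det(\psi_1,\ldots,\psi,\ldots,\psi_n)/J,\]
where $\psi$ replaces $\psi_i$. The numerator is again a determinant of invariant derivations, so it is divisible by $Q=J$. Hence $a_i\in R$. Applying $g$ and using uniqueness of coefficients shows $a_i\in R^G$. Freeness follows from $J\neq0$.

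The main obstacle is (1)$\Rightarrow$(2). Here we know the $\psi_i$ are a free basis, so $J\neq 0$ and $J=Qh$, and we must show $h$ is a constant. I would argue locally at height-one primes. If $h$ were nonconstant, pick a prime factor $P$ of $h$ and let $\mathfrak p=P\cap R^G$.

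First suppose $P$ is not one of the $l_UR$. Then the inertia group at $P$ is trivial, because an element acting trivially on $R/P$ fixes a hypersurface pointwise and hence is a reflection or the identity. So $R_{\mathfrak p}$ is étale over $R^G_{\mathfrak p}$. Galois descent then gives
\[R_{\mathfrak p}\otimes_{R^G_{\mathfrak p}}(R\otimes V)^G_{\mathfrak p}\cong R_{\mathfrak p}\otimes V.\]
Consequently the $\psi_i$ generate $R_P\otimes V$ locally, so $J$ is a unit in $R_P$, a contradiction.

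Next suppose $P=l_UR$. Here one must compute the exact $l_U$-adic valuation of $J$ for a local basis of invariant derivations under the inertia group $G_U$. I would do this by choosing coordinates adapted to $U$ and to the transvection root space, and exhibiting explicit local invariants: $n-1$ of them transverse to $l_U$ with unit coefficients, and one involving $l_U$ to the first power. This shows the valuation is exactly $1$, so $l_U^2\nmid J$. Hence $J=cQ$. This local analysis in the presence of transvections, where one cannot diagonalise $g$, is the step I expect to require the most care. It is precisely the content of \cite[Theorem~3.2(b)]{HansonShepler}, which one may alternatively quote directly.
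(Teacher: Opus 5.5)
The paper gives no proof of this proposition: it is quoted as a version of \cite[Theorem~3.2(b)]{HansonShepler}. Your Saito-style argument is therefore a genuinely different route, and most of it is correct. This covers $Q\mid J$ for any $n$ invariant derivations, the degree count for (2)$\Leftrightarrow$(3), Cramer's rule for (2)$\Rightarrow$(1), and the trivial-inertia primes in (1)$\Rightarrow$(2). What it buys over the citation is transparency: it shows why $Q$ appears with multiplicity one whatever $b_U,e_U$ are, in contrast with $Q_{\det}\tilde Q$ in Proposition~\ref{1formcriterion}.

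Three small repairs are needed.
\begin{itemize}
\item Since $\kk=\F_q$ here, vanishing at the points of $U$ does not imply divisibility by $l_U$. Use instead that $g$ acts trivially on $R/l_UR$, so $g\cdot\psi-\psi\equiv l_U(c)\otimes v_g$ modulo $l_UR\otimes V$.
\item (2) only holds up to a nonzero scalar.
\item $J\neq 0$ in (1)$\Rightarrow$(2) needs Galois descent for $\Quot(R)/\Quot(R)^G$, so that an $R^G$-basis of $(R\otimes V)^G$ is a $\Quot(R)$-basis of $\Quot(R)\otimes V$.
\end{itemize}

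The genuine gap is the case $P=l_UR$ of (1)$\Rightarrow$(2). You only sketch it and then fall back on the citation, at which point your proof is no more self-contained than the paper's. Exhibiting local invariants with determinant $l_U$ says nothing about $J$ unless two things hold. First, (a): the global basis $\psi_i$ must remain a basis of the local module of invariants of the inertia group $I$, the pointwise stabiliser of $U$. Second, (b): your local elements must actually form a basis of that module.

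For (a), let $\mathfrak p=l_UR\cap R^G$ and let $A^{sh}$ be the strict henselisation of $R^G_{\mathfrak p}$. Then $A^{sh}\otimes_{R^G}R$ is a product of local domains permuted transitively by $G$, and the stabiliser of one factor $S$ is $I$. Flatness gives $A^{sh}\otimes_{R^G}(R\otimes V)^G\cong(S\otimes V)^I$ and $S^I=A^{sh}$, so the $\psi_i$ form an $S^I$-basis of $(S\otimes V)^I$. The same argument with $I=1$ also covers your \'etale case.

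For (b), take $v_1,\dots,v_{n-1}$ spanning $U$ and $l_U=x_n$.
\begin{itemize}
\item The $\delta_j$ with $j<n$ are $I$-invariant, and the Euler derivation $E=\sum_j x_j\delta_j$ is $\GL(V)$-invariant.
\item Let $\theta=\sum_j c_j\delta_j$ be invariant. The congruence above holds in $S$, since $I$ acts trivially on $S/l_US$, so $c_n=l_Uh$.
\item Both $c_n$ and $x_n$ transform by $\det(g)^{-1}$, so $h\in S^I$.
\item Hence $\theta-hE$ is an $S^I$-combination of $\delta_1,\dots,\delta_{n-1}$.
\end{itemize}
So $\{\delta_1,\dots,\delta_{n-1},E\}$ is a basis with determinant $\pm l_U$. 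The transition matrix then gives $J=u\,l_U$ with $u\in S^\times$, hence $l_U^2\nmid J$. No coordinates adapted to the transvection root space are needed, so the difficulty you anticipate there does not arise.
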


\section{The invariant algebra}\label{sec:inv}

For the rest of this article we adopt the notation of Section \ref{sec:not}. We choose a basis
\[v_1 = \begin{pmatrix} 0 & -1 \\ 0 & 0 \end{pmatrix}, v_2 =  \begin{pmatrix} 1 & 0 \\ 0 & -1 \end{pmatrix}, v_3 =  \begin{pmatrix} 0 & 0 \\ 1 & 0 \end{pmatrix} \]
for $V$. Let $x_1,x_2,x_3$ be the corresponding dual basis for $V^*$, which will generate $R$ as a commutative algebra.
We will use a grevlex order on $R$ with $x_1>x_2>x_3$. We will use the terms $x_i$-degree to mean the usual degree of a polynomial in $R$ viewed as a polynomial in $x_i$ alone. 

We define an action of $\SL_2(F)$ on $V$ by setting
\[g \cdot v = gvg^{-1}\]
for $g \in \SL_2(F)$ and $v \in V$. This action has kernel $K$ which is nontrivial if $q$ is odd, and we set $G = \SL_2(F)/K$. Elements of $G$ will be written as matrices with square brackets. Note that if $q$ is even then $G = \SL_2(F)$, and in particular the trace of an element of $G$ is well-defined.

The order of $G$ is
\begin{equation}\label{evenorder} q(q^2-1)
\end{equation}
if $q$ is even and
\begin{equation}\label{oddorder} \frac12 q(q^2-1)
\end{equation}
if $q$ is odd.

We label certain elements of $G$: let $e \in F$ and let 
\[\tau_e:= \begin{bmatrix} 1 & e\\ 0 & 1 \end{bmatrix}.\]
Then $\{\tau_e: e \in F\}$ is a Sylow-$p$-subgroup $P \cong C^k_p$ of $G$. Let
\[\sigma:= \begin{bmatrix} 0 & 1 \\ -1 & 0 \end{bmatrix}.\] We have $\langle P, \sigma \rangle = G$.
Let $\rho: G \rightarrow \GL_3(F)$ be the representation afforded by the action of $G$ on $V$. Then we compute
\[\rho(\tau_e) = \begin{pmatrix} 1 & 2e & e^2\\ 0 & 1 & e\\ 0 & 0 & 1 \end{pmatrix},\]
and
\[\rho(\sigma) = \begin{pmatrix} 0 & 0 & 1\\ 0 & -1 & 0 \\ 1 & 0 & 0 \end{pmatrix}\]
Notice that:
\begin{Lemma}\label{insl3} 
$\rho(G) \subseteq \SL_3(F)$.
\end{Lemma}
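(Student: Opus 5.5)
Since $\rho$ is a group homomorphism, $\rho(G)$ is generated by $\rho(\tau_e)$ ($e\in F$) and $\rho(\sigma)$, because $\langle P,\sigma\rangle = G$. As $\det$ is multiplicative and $\SL_3(F)$ is a subgroup of $\GL_3(F)$, it suffices to show that each generator has determinant $1$.

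For $\rho(\tau_e)$ this is immediate: the matrix displayed above is upper unitriangular, so its determinant is $1$.

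For $\rho(\sigma)$, expand along the middle row. The only nonzero entry there is $-1$ in position $(2,2)$, whose cofactor is $\det\begin{pmatrix}0&1\\1&0\end{pmatrix} = -1$. Hence $\det\rho(\sigma) = (-1)(-1) = 1$. The same conclusion follows from the cycle structure: $\rho(\sigma)$ is $\mathrm{diag}(1,-1,1)$ composed with the transposition swapping the first and third coordinates, giving determinant $(-1)\cdot(-1)=1$. In characteristic $2$, where $-1=1$, the result is trivially $1$ as well.

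There is no real obstacle. The only point needing care is that $\rho$ is well defined on $G = \SL_2(F)/K$: this holds because $K$ acts trivially by conjugation, and it justifies reducing to the generators.

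Alternatively, one can argue without generators. The conjugation action of $\SL_2(F)$ on $V$ preserves the nondegenerate trace form $(A,B)\mapsto \tr(AB)$ when $q$ is odd, which gives $\det\rho(g) = \pm 1$. The sign is then forced to be $+1$ because $\SL_2(F)$ is generated by transvections, each of which acts unipotently. The generator computation above is shorter and works uniformly in all characteristics, so that is the proof to give.
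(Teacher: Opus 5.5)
Your proof is correct and is essentially the paper's own argument: the lemma is stated, with the words ``Notice that'', immediately after the explicit matrices $\rho(\tau_e)$ and $\rho(\sigma)$ and the remark $\langle P,\sigma\rangle = G$, so the intended verification is that these generators have determinant $1$, which your cofactor computation confirms. You could also read the result off the general formula \eqref{rho}, whose determinant is $(ad-bc)^3=1$, without reducing to generators.
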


More generally, for the element $g(a,b,c,d):= \begin{bmatrix} a&b\\c&d \end{bmatrix} \in G$ we have
\begin{equation}\label{rho}\rho(g) = \begin{pmatrix} a^2 & 2ab & b^2 \\ ac & ad+bc & bd \\ c^2 & 2cd & d^2   \end{pmatrix}.\end{equation}

\begin{Lemma}\label{norefs}\
\begin{enumerate}
\item[(a)] Suppose $q$ is even. Then $g \in G$ is a reflection if and only if $g$ has trace zero.
\item[(b)] Suppose $q$ is odd. Then no element of $G$ acts as a reflection on $V$.
\end{enumerate}
\end{Lemma}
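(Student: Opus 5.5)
The plan is to compute the fixed space of $\rho(g)$ from the eigenvalues of $g$ and then finish with a single explicit matrix computation. By Lemma \ref{insl3}, $\rho(G) \subseteq \SL_3(F)$, so any reflection is a transvection: $\rho(g) \neq I$ and $\rho(g)-I$ has rank exactly $1$.

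First, reduce to the unipotent case. Lift $g$ to $\tilde g \in \SL_2(F)$ and let $\lambda, \lambda^{-1}$ be its eigenvalues in $\overline{F}$. Conjugation by $\tilde g$ on trace-free matrices has eigenvalues $1, \lambda^2, \lambda^{-2}$. This is clear if $\tilde g$ is diagonalisable, where the eigenvectors are $v_1, v_2, v_3$ after a change of basis. In general it follows by triangularising $\tilde g$ over $\overline{F}$, since formula \eqref{rho} is triangular with diagonal $a^2, ad, d^2$ when $c=0$. If $\rho(g)$ is a transvection, all its eigenvalues equal $1$. Hence $\lambda^2 = 1$, so $\lambda = \pm 1$ and $\tilde g$ or $-\tilde g$ is unipotent. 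Replacing $\tilde g$ by $-\tilde g$ does not change $\rho(g)$, so we may assume $\tilde g$ is unipotent. Also $\tilde g \neq \pm I$, because $\rho(g) \neq I$ and $K$ is the kernel.

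Second, normalise the unipotent element. A nontrivial unipotent element of $\SL_2(F)$ is conjugate in $\GL_2(F)$ to $\tau_e$ for some $e \neq 0$. Conjugation by $\GL_2(F)$ also acts linearly on $V$, and conjugating $\tilde g$ conjugates $\rho(\tilde g)$. This does not change the rank of $\rho(\tilde g)-I$. So it suffices to examine
\[\rho(\tau_e) - I = \begin{pmatrix} 0 & 2e & e^2 \\ 0 & 0 & e \\ 0 & 0 & 0\end{pmatrix}.\]
If $p$ is odd, then $2e \neq 0$ and the rank is $2$, so $\tau_e$ is not a reflection. Combined with the reduction above, this proves (b). If $p=2$, then $2e=0$ and the rank is $1$, so every nontrivial unipotent element is a reflection.

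Third, deduce (a). In characteristic $2$ we have $\lambda = \lambda^{-1}$ exactly when $\lambda = 1$. Also, $\operatorname{trace}(\tilde g) = \lambda + \lambda^{-1} = 0$ if and only if $\lambda^2 = 1$, that is, if and only if $\tilde g$ is unipotent. So for $q$ even, $g$ is a reflection if and only if $g$ is a nontrivial unipotent element, if and only if $g \neq 1$ and $g$ has trace zero. The identity also has trace $2 = 0$, so statement (a) should be read for nonidentity $g$, consistent with the convention that reflections are nontrivial.

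The only slightly delicate step is the eigenvalue computation in the non-diagonalisable case. Triangularising over $\overline{F}$ and reading off the diagonal of \eqref{rho} handles it cleanly.
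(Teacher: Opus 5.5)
Your proof is correct and follows essentially the paper's route: the eigenvalues $1,\lambda^2,\lambda^{-2}$ of $\rho(g)$, then an explicit computation for the unipotent Jordan block, which you do via the matrix $\rho(\tau_e)$ where the paper computes a centraliser. The small differences are that you use $\rho(G)\subseteq \SL_3(F)$ to force unipotence (the paper instead observes that exactly two eigenvalues equal to $1$ is impossible), and you get the converse in (a) from $\lambda+\lambda^{-1}=0 \iff \lambda=1$ in characteristic $2$ (the paper exhibits the fixed hyperplane). Your remark that the identity has trace $0$ in characteristic $2$, so (a) must be read for $g\neq 1$, is correct; the paper's statement and proof gloss over this.
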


\begin{proof} Recall that $g \in G$ is a reflection if and only if $\rk(\rho(g)-I)=1$, equivalently if and only if $\dim \ker(\rho(g)-I)=2$. Let $\hat{g} \in \SL_2$ be a representative of $g$. Identifying $V$ and $F^3$, we see that $A \in \ker(\rho(g)-I)$ if and only if $\hat{g}A=A\hat{g}$, independently of the chosen representative. 
As these dimensions are not affected by extending the field, and fixed, we can pass to an algebraic closure $\overline{F}$ of $F$. As they are fixed under conjugation in $\GL_2(\overline{F})$, we may assume $g \in G$ is represented by a matrix in $\SL_2(F)$ in Jordan form. If this matrix is semi-simple, then its eigenvalues are $\lambda$ and $\lambda^{-1}$ for some $\lambda \in \overline{F}^*$, and the eigenvalues of $\rho(g)$ are $1$,  $\lambda^2$ and $\lambda^{-2}$. Then $g \in G$ is a reflection if and only if exactly two of these eigenvalues are equal to 1, which is impossible in any characteristic.

On the other hand if 
\[h=\begin{pmatrix} \pm 1 & 1 \\ 0 & \pm 1 \end{pmatrix} \in \SL_2(F)\]
and 
\[A = \begin{pmatrix} a & b \\ c & -a \end{pmatrix}\]
then a quick calculation shows that $hA=Ah$ if and only if $c=0$ and $2a=0$. Thus we obtain 
\[\dim(\ker(\rho(g)-I)) = \left\{ \begin{array}{lr} 2 & q \ \text{even}\\ 1 & q \ \text{odd},  \end{array} \right.\]
for any $g \in G$ represented by a matrix whose Jordan form is $h$.  
Therefore there are no pseudoreflections in the odd case. In the even case, if $g$ has Jordan form $h$ as above then $\tr(g)=0$. Conversely, using \eqref{rho}, we see that the general element
\[g = \begin{bmatrix} a & b\\ c& a \end{bmatrix}\] of $G$ with trace zero fixes the hyperplane spanned by $v_2$ and $bv_1+cv_3$. 
\end{proof}


We are now ready to describe the algebra of invariants $R^G$. Translating the results of \cite{Maithani}  into our preferred basis, we define
\[f_1:= x_1x_3-x_2^2,\]
and
\[f_2:= x_1^qx_3+x_1x_3^q-2x_2^{q+1}.\]
Both are easily seen to be invariant under $G$. Indeed, $f_1$ is the determinant of a generic trace-free matrix $x_1v_1+x_2v_2+x_3v_3$ and $f_2 = \mathcal{P}^1(f_1)$, where $\mathcal{P}^1$ is the first Steenrod squaring operation in $\F_q$. We recommend \cite{SmithNeusel} as a resource for learning about applications of the Steenrod algebra to invariant theory. 

Fix a choice of irreducible quadratic \begin{equation}\label{g}g (x) = x^2 - \gamma x - \delta\end{equation} in $F$. Define
\[\tilde{f_3} = \prod_{\alpha \in F, \beta \in F^*} \left(x_1 - \frac{(2\alpha-\gamma)}{\beta}x_2 + \frac{g(\alpha)}{\beta^2} x_3 \right).\]

The following results follow from \cite[Section~7, 8]{Maithani} (note the change of sign and notation; the source uses $g(x)=x^2-\tau x+\delta$):
\begin{prop}\label{preinv}\
\begin{enumerate}
\item[(a)] There exists $f_3 \in R^G$ such that $f_3^2 = \tilde{f_3}$.

\item[(b)] For any appropriate choice of $f_3$, $\{f_1,f_2,f_3\}$ is a homogeneous system of parameters for $R^G$. \\

Let $\mathcal{A} = F[f_1,f_2,f_3] \subseteq R^G$. As $\{f_1,f_2,f_3\}$ form a homogeneous system of parameters, $\mathcal{A}$ is a polynomial algebra, and is independent of the choice of irreducible quadratic used to define $f_3$. Moreover we have:\\

\item[(c)] Suppose $q$ is even. Then $R^G = \mathcal{A}$.

\item[(d)] Suppose $q$ is odd. Then $R^G$ is a free module of rank 2 over $\mathcal{A}$, generated by $1$ and
\[f_4: =| \Jac(f_1,f_2,f_3)|.\]
\end{enumerate}
\end{prop}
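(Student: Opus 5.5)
This proposition is quoted from \cite{Maithani}. Rather than reproduce that argument, I would give a self-contained route in our basis, treating the four parts in order.

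For (a), I would first show that the factors of $\tilde{f_3}$ are permuted up to scalars by $G$. Each factor is a linear form $l_{\alpha,\beta}=x_1-\frac{2\alpha-\gamma}{\beta}x_2+\frac{g(\alpha)}{\beta^2}x_3$. Under the identification of $V^*$ with binary quadratic forms, it corresponds to a quadratic form with irreducible discriminant. The set $\{l_{\alpha,\beta}\}$ is then exactly the set of anisotropic lines (up to scalars) of one fixed square class of the form $f_1$. Since $G$ preserves $f_1$, it permutes these lines, and a line is represented by $(q-1)$ scalar multiples. Consequently $g\cdot\tilde{f_3}=\chi(g)\tilde{f_3}$ for some linear character $\chi$. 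For $q$ odd, with $q\neq 3$, $\PSL_2$ is perfect, so $\chi$ is trivial; for $q=3$ I would check $\chi$ on the generators $\tau_e$ and $\sigma$. The scalars in each orbit pair up as $\beta$ and $-\beta$, with $l_{\alpha,-\beta}$ differing from $l_{\alpha,\beta}$ only in the sign of the $x_2$ coefficient, so the product is naturally a square. Explicitly, I would set $f_3$ equal to the product over $\beta$ in a set of representatives of $F^*/\{\pm1\}$, adjusted by the factor relating $l_{\alpha,-\beta}$ to a rescaled $l_{\alpha',\beta}$. In even characteristic, $\tilde{f_3}$ is a square in $R$ because Frobenius is bijective on $F$. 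Invariance of $f_3$ then follows from invariance of $\tilde{f_3}$, using that $G$ is perfect or by checking $\tau_e$ and $\sigma$ directly.

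For (b), I would show that the only common zero of $f_1,f_2,f_3$ over $\overline{F}$ is $0$. If $f_1(v)=0$, then $v$ is a nilpotent trace-free matrix, so it is conjugate over $\overline F$ to a multiple of $v_1$ or $v_3$. Using $f_2(v)=0$ together with the Frobenius structure, I would reduce to the case where $v$ lies on an $F$-rational isotropic line. Then each $l_{\alpha,\beta}$ restricted there is nonzero, since $g$ is irreducible, so $f_3(v)\neq 0$. This gives an hsop. Its degree product is $2(q+1)\cdot\tfrac12 q(q-1)=q(q^2-1)$, which equals $|G|$ for $q$ even and $2|G|$ for $q$ odd.

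For (c) and (d), I would use the standard criterion: if a polynomial hsop has degree product $|G|$ and the Jacobian is nonzero, then the invariant ring is generated by the hsop, by \cite[Theorem~3.7.5]{DerksenKemper}. For $q$ even this gives (c). For $q$ odd, $R^G$ is Cohen--Macaulay by Ellingsrud--Skjelbred, since $\dim V=3$. Its rank over $\mathcal A$ is $\prod\deg f_i/|G|=2$. The secondary generators have degrees $0$ and $\sum(\deg f_i-1)$, obtained from the Hilbert series, the absence of reflections (Lemma~\ref{norefs}), and the $a$-invariant. The top one must be $f_4=\Jac(f_1,f_2,f_3)$, which is invariant because $\rho(G)\subseteq\SL_3$ by Lemma~\ref{insl3}. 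The main obstacle is showing that $f_4\notin\mathcal A$, equivalently that it is nonzero modulo $(f_1,f_2,f_3)R$. My first attempt would be to compute leading terms in grevlex order, or to evaluate at a suitable point, to show that $f_4$ is nonzero and has odd degree in a variable where $\mathcal A$ has none.
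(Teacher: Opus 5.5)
Your parts (b) and (c), and your reduction of (d) to the single claim $f_4\notin\mathcal A$, are correct in outline. They are also more self-contained than the paper, which quotes Maithani for this proposition. The paper re-derives only (a) for odd $q$, using the product over $L$, the two-to-one map $\Phi$, and the absence of nontrivial characters of $\PSL_2(F)$; that is essentially your (a). The genuine gap is the claim $f_4\notin\mathcal A$ itself, and the reformulation you propose for it is false: $f_4$ always lies in $(f_1,f_2,f_3)R$. The reason is that $f_1,f_2,f_3$ is a regular sequence, so Tate's trace formula for complete intersections gives $\operatorname{Tr}_{R/\mathcal A}=f_4\cdot\tau$ for a homogeneous generator $\tau$ of $\operatorname{Hom}_{\mathcal A}(R,\mathcal A)$. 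Hence $\tau(f_4)=\operatorname{rank}_{\mathcal A}R=q(q^2-1)$, which is $0$ in $F$. Now $\tau$ induces a nonzero functional on the one-dimensional top-degree (socle) component of $R/(f_1,f_2,f_3)R$, and that component is where the image of $f_4$ lives, so this image is zero. For $q=3$ you can confirm by hand that the secondary invariant $b$ of Section~\ref{sec:three} lies in $(a_1,a_2,a_3)R$. So only the implication $f_4\notin(f_1,f_2,f_3)R\Rightarrow f_4\notin\mathcal A$ is valid, and its hypothesis never holds. The converse fails because $\mathcal A_+R\cap R^G\neq\mathcal A_+R^G$ here. Leading-term computations aimed at nonvanishing modulo this ideal therefore cannot succeed.

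What does work, and is presumably what your remark about odd degree is reaching for, is a sign symmetry you have not identified. The substitution $x_2\mapsto -x_2$ has determinant $-1$. It fixes $f_1$, fixes $f_2$ (as $q+1$ is even), and fixes $f_3$ (as $L$ is stable under $b\mapsto -b$), so $\mathcal A\subseteq F[x_1,x_2^2,x_3]$. The chain rule gives $f_4(x_1,-x_2,x_3)=-f_4$, so $f_4\in x_2F[x_1,x_2^2,x_3]$, and $f_4\in\mathcal A$ would force $f_4=0$. Moreover $f_4\neq 0$, because $\Quot(R)$ is separable over $F(f_1,f_2,f_3)$: it is Galois of degree $|G|$ over $\Quot(R)^G$, which has degree $q(q^2-1)/|G|=2$ over $F(f_1,f_2,f_3)$, and $p$ is odd.

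There are two smaller slips in (a).
\begin{enumerate}
\item The factors $l_{\alpha,\beta}$ are already normalised, and each occurs exactly twice, not $q-1$ times. Indeed $l_{\alpha,-\beta}=l_{\gamma-\alpha,\beta}$ exactly, with no rescaling; this is the paper's fibre lemma for $\Phi$.
\item For even $q$, bijectivity of Frobenius alone does not make a product of linear forms a square. You need even multiplicities, and these come from the same involution, which in characteristic $2$ is $(\alpha,\beta)\mapsto(\alpha+\gamma,\beta)$. It is fixed-point-free because $\gamma\neq 0$ when $g$ is irreducible.
\end{enumerate}
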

In particular, this completes the proof of Theorem \ref{invars}.


\section{Covariants and Differential Invariants}

The goal of this section is to construct a minimal generating set of $\Omega^G$ for an arbitrary prime power $q$. Our strategy is to consider $(\Omega^i)^G$ separately for each $i$. We note that $\Lambda^0(V^*)$ is a trivial $FG$-module, and, since $\rho(G) \subseteq \SL_3(F)$, $\Lambda^3(V^*)$ is also trivial. Furthermore, $\Lambda^1(V^*) \cong V^*$ and $\Lambda^2(V^*) \cong V$. We see immediately that $(\Omega^0)^G$ and $(\Omega^3)^G$ are free $R^G$-modules generated by $1$ and $\dd x_1 \wedge \dd x_2 \wedge \dd x_3$ respectively, and that
\[(\Omega^1)^G \cong (R \otimes V^*)^G,\]
\[(\Omega^2)^G \cong (R \otimes V)^G.\]

We distinguish two cases:

\subsection{The even case}\label{sec:even}

If $q$ is even, then we have $R^G= \mathcal{A} = F[f_1,f_2,f_3]$. In particular, since $R^G$ has polynomial invariants, $G$ must act as a reflection group on $V$. By inspection, all the generators $\tau_e \ (e \in F^*)$ and $\sigma$ act as reflections on $V$. In fact, since $\rho(G) \subseteq \SL_3(F)$, all reflections in $G$ are transvections.

Recall from Lemma \ref{norefs} that the reflections in $G$ are precisely those elements with trace zero. The general element
\[g =\begin{bmatrix} a & b\\ c& a \end{bmatrix}\] of $G$ with trace zero fixes the hyperplane spanned by $v_2$ and $bv_1+cv_3$, in other words, it fixes $$U = \ker(l_{U}) = \ker(cx_1+bx_3).$$

Therefore, $V$ contains $q+1$ reflecting hyperplanes, and these are indexed by lines $(b:c) \in \mathbb{P}^1(F)$. The space of (transvection) root vectors of $U$ is
\[\left\{\frac{a^2-1}{\hat{c}}v_1+av_2+\hat{c}v_3: a \in F, (\hat{b},\hat{c}) \in (b:c)\right\}\] if $c \neq 0$ and
\[\left\{\hat{b}v_1+av_2+\frac{a^2-1}{\hat{b}}v_3: a \in F, (\hat{b},\hat{c}) \in (b:c)\right\}\] if $b \neq 0$ (these agree if $b$ and $c$ are both nonzero, since $a^2+bc=1$). Each space has dimension 2 unless $q=2$ in which case each has dimension 1.

In the notation of Section \ref{sec:refs}, we therefore have
\[Q = \prod_{(b:c) \in \mathbb{P}^1(F)}(cx_1+bx_3) = x_1^qx_3+x_1x_3^q = f_2,\]
and
\[\tilde{Q} =  \prod_{(b:c) \in \mathbb{P}^1(F)}(cx_1+bx_3)^2 = (x_1^qx_3+x_1x_3^q)^2 = f^2_2\]
if $q>2$, while $\tilde{Q} = f_2$ if $q=2$. Clearly we also have $Q_{\det}=1$.

Three obvious elements of $(\Omega^1)^G$ are $\dd f_1, \dd f_2$ and $\dd f_3.$ These have degrees $1$, $q$ and $\frac12 q(q-1)-1$  respectively. Note that
\[\dd f_1 = x_3 \dd x_1+ x_1 \dd x_3\]
and
\[\dd f_2 = x_3^q \dd x_1+ x_1^q \dd x_3.\]

Suppose $q=2$. Then $f_3 = x_1+x_2+x_3$ and 
\begin{equation}\label{df3q2}\dd f_3 = \dd x_1+\dd x_2+ \dd x_3.\end{equation}
By direct calculation we have
\[\dd f_1 \wedge \dd f_2 \wedge \dd f_3 = f_2 \dd x_1 \wedge \dd x_2 \wedge \dd x_3.\] Then Proposition \ref{1formcriterion} implies immediately:

\begin{prop}
Let $q=2$. Then $(\Omega^1)^G$ is a free $R^G$-module generated by $\dd f_1, \dd f_2, \dd f_3$.
\end{prop}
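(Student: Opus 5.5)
The plan is to apply Proposition \ref{1formcriterion} via condition (2). By the isomorphism $(\Omega^1)^G \cong (R \otimes V^*)^G$, the three forms $\dd f_1, \dd f_2, \dd f_3$ are $G$-invariant elements of $(R\otimes V^*)^G$: each $f_i$ lies in $R^G$ and $\dd$ is $G$-equivariant. So it suffices to check
\[\dd f_1 \wedge \dd f_2 \wedge \dd f_3 = Q_{\det}\tilde{Q}\, \dd x_1 \wedge \dd x_2 \wedge \dd x_3.\]

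First I record the data for $q=2$ from the preceding discussion. Since $\rho(G)\subseteq \SL_3(F)$ by Lemma \ref{insl3}, every reflection is a transvection. Hence $e_U=1$ for every reflecting hyperplane and $Q_{\det}=1$. Each transvection root space has dimension $b_U=1$ when $q=2$, so $\tilde{Q}=\prod_U l_U = Q = f_2$. As a consistency check, condition (3) also fits: the degrees $1,2,0$ of $\dd f_1,\dd f_2,\dd f_3$ sum to $3=|\mathcal{U}|=q+1$, which equals $\sum_U(b_Ue_U+e_U-1)$.

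Next I compute the wedge product directly. In characteristic $2$ we have $\dd f_1 = x_3\dd x_1 + x_1\dd x_3$, $\dd f_2 = x_3^2\dd x_1 + x_1^2\dd x_3$ and $\dd f_3=\dd x_1+\dd x_2+\dd x_3$. The first two forms involve only $\dd x_1$ and $\dd x_3$. Therefore only the $\dd x_2$ term of $\dd f_3$ contributes, and the product equals $\pm\det\begin{pmatrix} x_3 & x_1\\ x_3^2 & x_1^2\end{pmatrix}\dd x_1\wedge\dd x_2\wedge\dd x_3$. This determinant is $x_1^2x_3 - x_1x_3^2 = x_1^2x_3+x_1x_3^2 = f_2$ in characteristic $2$, and signs are irrelevant there. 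This is exactly the identity recorded before the statement.

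Condition (2) of Proposition \ref{1formcriterion} then holds, and the equivalence with (1) gives freeness with the stated generators. The only point needing care is the value of $\tilde{Q}$. Here one must confirm $b_U=1$ for $q=2$, i.e.\ that the root vectors $\frac{a^2-1}{\hat c}v_1+av_2+\hat c v_3$ with $a\in\F_2$ span a line; both choices of $a$ give $v_1+v_3$ (respectively $v_2+\ldots$ in the other charts). I expect this to be the main, though minor, obstacle. Everything else is a direct invocation of Proposition \ref{1formcriterion}.
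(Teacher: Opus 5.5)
Your route is the paper's: it also uses $f_3=x_1+x_2+x_3$ and computes $\dd f_1\wedge\dd f_2\wedge\dd f_3=f_2\,\dd x_1\wedge\dd x_2\wedge\dd x_3$ directly. It then invokes condition (2) of Proposition \ref{1formcriterion} with $Q_{\det}=1$ and $\tilde{Q}=f_2$. Your wedge computation and the value $Q_{\det}=1$ are correct.

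The step you single out as needing care is, however, verified incorrectly. Take $\hat c=1$ in $\frac{a^2-1}{\hat c}v_1+av_2+\hat c v_3$. Then $a=0$ gives $v_1+v_3$, but $a=1$ gives $v_2+v_3$, not $v_1+v_3$. These vectors are linearly independent. Read literally, your check therefore yields $b_U=2$ and $\tilde{Q}=f_2^2$ of degree $6$, which contradicts your own wedge computation.

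The resolution is that $a$ is not a free parameter. The transvection about the hyperplane indexed by $(b:c)$ is $\begin{bmatrix} a&\hat b\\ \hat c&a\end{bmatrix}$, where $(\hat b,\hat c)$ is a nonzero point on the line $(b:c)$ and the determinant forces $a^2=1+\hat b\hat c$. The paper's displayed formula should be read with this constraint. Over $\F_2$ each line has a single nonzero point, so each of the three hyperplanes carries exactly one transvection:
\begin{itemize}
\item $\tau_1$ on $\ker x_3$, with root vector $v_1+v_2$;
\item $\begin{bmatrix}1&0\\1&1\end{bmatrix}$ on $\ker x_1$, with root vector $v_2+v_3$;
\item $\sigma$ on $\ker(x_1+x_3)$, with root vector $v_1+v_3$.
\end{itemize}
Equivalently, $G\cong S_3$ has exactly three involutions, one per reflecting hyperplane. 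Hence each $T_U$ is a line, $b_U=1$, and $\tilde{Q}=Q=f_2$. With this correction your argument is complete.
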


Three obvious elements of $(\Omega^2)^G$ are $\dd f_1 \wedge \dd f_2, \dd f_2 \wedge \dd f_3, \dd f_3 \wedge \dd f_1$. In case $q=2$, their degrees are $3$, $2$ and $1$ respectively. The sum of their degrees is 6 while the degree of $Q=f_2$ is 3. Observe that $\dd f_1 \wedge \dd f_2$ is divisible by $f_2$, and
\begin{equation}\label{df1wedgedf2} \frac{\dd f_1 \wedge \dd f_2}{f_2} = \dd x_1 \wedge \dd x_3 \in (\Omega^2)^G \end{equation}  has polynomial degree zero. Further,
$$ \dd f_2 \wedge \dd f_3 = x_3 \dd x_1 \wedge \dd x_2 + x_3 \dd x_2 \wedge \dd x_3 + (x_1+x_3)\dd x_3 \wedge \dd x_1,$$ 
and
$$ \dd f_3 \wedge \dd f_1 = x^2_3 \dd x_1 \wedge \dd x_2 + x^2_3 \dd x_2 \wedge \dd x_3 + (x^2_1+x^2_3)\dd x_3 \wedge \dd x_1.$$ 
These have degree sum $0+1+2 = 3 = \deg(f_2)$ and are clearly linearly independent over $\Quot(R)$ as their degrees are distinct. Now Proposition \ref{derivcriterion} implies immediately:

\begin{prop}
Let $q=2$. Then $(\Omega^2)^G$ is a free $R^G$-module generated by $f_2^{-1} \dd f_1 \wedge \dd f_2, \dd f_2 \wedge \dd f_3, \dd f_3 \wedge \dd f_1$.
\end{prop}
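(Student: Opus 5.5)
The plan is to apply Proposition \ref{derivcriterion} through the identification $(\Omega^2)^G \cong (R \otimes V)^G$ recorded at the start of this section. Since $\rho(G) \subseteq \SL_3(F)$ (Lemma \ref{insl3}), the map $\Lambda^2(V^*) \to V$ induced by contraction with the invariant volume form $\dd x_1 \wedge \dd x_2 \wedge \dd x_3$ is a $G$-isomorphism. Under it, $\dd x_2 \wedge \dd x_3$, $\dd x_3 \wedge \dd x_1$ and $\dd x_1 \wedge \dd x_2$ go to $\delta_1,\delta_2,\delta_3$, up to sign, and signs are irrelevant in characteristic $2$. With this dictionary, a triple $\psi_1,\psi_2,\psi_3$ of $2$-forms has $\psi_1 \wedge \psi_2 \wedge \psi_3$ (computed in $R \otimes \Lambda(V)$) equal to $D\,\delta_1 \wedge \delta_2 \wedge \delta_3$. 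Here $D$ is the determinant of the $3\times 3$ matrix of coefficients of the $\psi_i$ in the basis $\dd x_2\wedge \dd x_3,\ \dd x_3 \wedge \dd x_1,\ \dd x_1 \wedge \dd x_2$.

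First I check that the three elements lie in $(\Omega^2)^G$. The forms $\dd f_2 \wedge \dd f_3$ and $\dd f_3 \wedge \dd f_1$ are invariant because $\dd$ is equivariant and $f_i \in R^G$. For the first element, \eqref{df1wedgedf2} shows that $\dd f_1 \wedge \dd f_2$ is divisible by the invariant $f_2$ in $\Omega$, with quotient $\dd x_1 \wedge \dd x_3$. Since $\Omega$ is a free $R$-module and $R$ is a domain, this quotient is again $G$-fixed.

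Next I recompute the two remaining forms carefully in characteristic $2$, using $\dd f_1 = x_3\dd x_1 + x_1 \dd x_3$, $\dd f_2 = x_3^2\dd x_1 + x_1^2\dd x_3$ and \eqref{df3q2}. I expect
\[\dd f_2\wedge \dd f_3 = x_1^2\,\dd x_2\wedge \dd x_3 + (x_1^2+x_3^2)\,\dd x_3 \wedge \dd x_1 + x_3^2\,\dd x_1\wedge \dd x_2,\]
and the analogous expression with exponents $1$ for $\dd f_3 \wedge \dd f_1$. The coefficient of $\dd x_2 \wedge \dd x_3$ needs checking against the display above: if it were $x_3$, resp.\ $x_3^2$, the determinant would vanish. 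The coefficient matrix of the three elements then has rows $(0,1,0)$, $(x_1^2, x_1^2+x_3^2, x_3^2)$ and $(x_1, x_1+x_3, x_3)$. Its determinant is $x_1^2x_3 + x_1x_3^2 = f_2 = Q$.

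So condition (2) of Proposition \ref{derivcriterion} holds, and the proposition follows. As a cross-check via condition (3), the degrees are $0,1,2$, summing to $3 = |\mathcal{U}| = q+1$, and the nonzero determinant gives linear independence over $\Quot(R)$. Distinctness of degrees alone would not suffice for this. The main obstacle is precisely this determinant computation, since the sign and coefficient bookkeeping is where errors creep in. I would verify it twice, once directly and once by checking that the determinant has degree $3$ and vanishes on each reflecting hyperplane $x_1=0$, $x_3=0$, $x_1=x_3$.
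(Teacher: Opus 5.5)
Your proof is correct, but it verifies a different clause of Proposition~\ref{derivcriterion} than the paper does.

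\textbf{The paper's route.} The paper uses clause (3). It notes that the degrees $0,1,2$ sum to $3=\deg f_2$, and asserts linear independence over $\Quot(R)$ ``as their degrees are distinct''. That inference is not valid in general: multiplying a form by a polynomial changes its degree without breaking a dependence.

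\textbf{The paper's displays.} You were right to distrust them, for two reasons.
\begin{itemize}
\item The two $2$-forms have their labels swapped. The form with quadratic coefficients is $\dd f_2\wedge \dd f_3$, not $\dd f_3\wedge\dd f_1$.
\item The coefficient of $\dd x_2\wedge \dd x_3$ should be $x_1^2$ (respectively $x_1$), not $x_3^2$ (respectively $x_3$). The printed versions are not even $\sigma$-invariant.
\end{itemize}
With the printed coefficients, the rows $(0,1,0)$, $(x_3^2,x_1^2+x_3^2,x_3^2)$ and $(x_3,x_1+x_3,x_3)$ have determinant $0$. So those three forms would be dependent despite having distinct degrees.

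\textbf{Your route.} Your expressions are the correct ones; I checked them directly in characteristic $2$. You use clause (2): the contraction isomorphism $\Lambda^2(V^*)\cong V$, which is $G$-equivariant because $\rho(G)\subseteq \SL_3(F)$, followed by the determinant $x_1^2x_3+x_1x_3^2=f_2=Q$. This single computation gives both independence and freeness, so it closes the gap left in the paper's argument.

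\textbf{Comparison.} The paper's route avoids setting up the dictionary between $2$-forms and the $\delta_i$. However, it still needs a genuine independence proof, and your nonzero determinant is the natural one. Your cross-check against clause (3) and the vanishing on the three hyperplanes $x_1=0$, $x_3=0$, $x_1=x_3$ are also consistent.
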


This completes the proof of Theorem \ref{even} in the case $q=2$. To prove Theorem \ref{evenmin}, note that the $R^G$-module generating set is an algebra generating set. It is clear that $\dd f_1 \wedge \dd f_3$ and $\dd f_2 \wedge \dd f_3$ can be omitted, and further 
\begin{align*} (f_2^{-1}\dd f_1 \wedge \dd f_2) \wedge \dd f_3  &= (\dd x_1 \wedge \dd x_3) \wedge (\dd x_1+\dd x_2 + \dd x_3) \\
&= \dd x_1 \wedge \dd x_2 \wedge \dd x_3 \end{align*}
by \eqref{df3q2} and \eqref{df1wedgedf2}, so the latter can also be omitted. Minimality now follows from freeness over $R^G$ and from degree considerations.

Now suppose $q>2$. The degree sum of $\dd f_1, \dd f_2$ and $\dd f_3$ is then $\frac12 q(q+1)$ and this exceeds $\deg(\tilde{Q}) = \deg(f_2^2) = 2(q+1)$ if $q>4$. Instead we consider
\[\omega:= (x_2^qx_3+x_2x_3^q)\dd x_1+  (x_1^qx_3+x_1x_3^q)\dd x_2 + (x_1^qx_2+x_1x_2^q)\dd x_3,\]
with degree $q+1$. (If $q=4$, then $\omega^2 = \tilde{f_3}$ so we can take $\omega=f_3$.) Now easy direct calculations show that $\omega \in (\Omega^1)^G$ and furthermore
\[\dd f_1 \wedge \dd f_2 \wedge \omega = f_2^2.\]

Then Proposition \ref{1formcriterion} implies immediately:
\begin{prop}
Let $q>2$. Then $(\Omega^1)^G$ is a free $R^G$-module generated by $\dd f_1, \dd f_2$ and $\omega$.
\end{prop}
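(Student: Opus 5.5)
We apply Proposition \ref{1formcriterion}, and it suffices to check condition (2) (equivalently (3)) for the triple $\dd f_1, \dd f_2, \omega$. The data from the preceding discussion are as follows. Since $\rho(G)\subseteq \SL_3(F)$ by Lemma \ref{insl3}, every $e_U=1$ and so $Q_{\det}=1$. For $q>2$ each reflecting hyperplane $U$, indexed by $(b:c)\in\mathbb{P}^1(F)$, has a maximal transvection root space with $b_U=2$. Hence $\tilde Q=f_2^2$, and the target identity is
\[\dd f_1\wedge \dd f_2\wedge\omega = f_2^2\,\dd x_1\wedge\dd x_2\wedge\dd x_3 .\]
The degree check in (3) is then immediate: $1+q+(q+1)=2(q+1)=\sum_U b_U e_U$, since there are $q+1$ hyperplanes.

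The first step is to verify that $\omega$ is invariant. Because $G=\langle P,\sigma\rangle$, it is enough to check invariance under $\sigma$ and under each $\tau_e$, using the contragredient action on $x_i$ and $\dd x_i$ obtained from the displayed matrices $\rho(\tau_e)$ and $\rho(\sigma)$. The check for $\sigma$ is easy: up to signs, $\sigma$ swaps $x_1\leftrightarrow x_3$ and fixes $x_2$ (note $-1=1$ in characteristic 2), and $\omega$ is visibly symmetric under this swap.

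For $\tau_e$ in characteristic 2 the relevant matrix is $\begin{pmatrix}1&0&e^2\\0&1&e\\0&0&1\end{pmatrix}$. The dual action is then a linear substitution, and we use $(u+v)^q=u^q+v^q$ together with $e^q=e$. It also helps to rewrite $\omega$ as the ``determinant'' with rows $(x_1,x_2,x_3)$, $(x_1^q,x_2^q,x_3^q)$ and $(\dd x_1,\dd x_2,\dd x_3)$, which is legitimate since signs are irrelevant in characteristic 2. In this form, because the Frobenius acts trivially on $F$, the three rows transform by the same matrix $M=\rho(\tau_e)^{-T}$. The expression is therefore multiplied by $\det M=1$ (Lemma \ref{insl3}), and this gives invariance under all of $G$ simultaneously, without checking generators separately.

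The second step is to compute the wedge product. We have $\dd f_1=x_3\dd x_1+x_1\dd x_3$ and $\dd f_2=x_3^q\dd x_1+x_1^q\dd x_3$, so
\[\dd f_1\wedge\dd f_2=(x_3x_1^q+x_1x_3^q)\,\dd x_1\wedge\dd x_3=f_2\,\dd x_1\wedge\dd x_3.\]
Wedging with $\omega$ keeps only the $\dd x_2$ coefficient of $\omega$, which is $x_1^qx_3+x_1x_3^q=f_2$. This yields $f_2^2\,\dd x_1\wedge\dd x_2\wedge\dd x_3$, so condition (2) holds and Proposition \ref{1formcriterion} gives the claim. As an independent check, the nonvanishing of this product also shows linear independence over $\Quot(R)$, so condition (3) holds too.

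The main obstacle is the invariance of $\omega$ together with the correct identification of $b_U=2$. We handle the first with the determinant/Frobenius argument above, which we expect to be cleaner than a direct check on generators. The second was already established in the description of the root spaces for $q>2$.
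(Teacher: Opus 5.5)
Your proposal is correct and follows the paper's own route. You check that $\omega$ is invariant, compute $\dd f_1\wedge\dd f_2\wedge\omega=f_2^2\,\dd x_1\wedge\dd x_2\wedge\dd x_3$, and invoke Proposition \ref{1formcriterion}(2) with $Q_{\det}=1$ and $\tilde Q=f_2^2$; your determinant/Frobenius argument just spells out the invariance check the paper calls an ``easy direct calculation'' (and in fact shows $\omega$ is invariant under all of $\SL_3(\F_q)$).
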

 
Now when $q>2$ we have $b_U=2$ for all $U \in \mathcal{U}$, in other words, all transvection root spaces are maximal. Therefore \cite[Theorem~4.6]{HansonShepler} applies: the wedge product of each pair in $\{\dd f_1, \dd f_2, \omega\}$ is divisible by $f_2$, and the three 2-forms $f_2^{-1} \dd f_1 \wedge \dd f_2,f_2^{-1} \dd f_1 \wedge \omega$ and $f_2^{-1} \dd f_2 \wedge \omega$ generate $(\Omega^2)^G$ as a free module. We note here that

\begin{align*} \frac{\dd f_1 \wedge \dd f_2}{f_2} &= \dd x_1 \wedge \dd x_3\\ \frac{{\dd f_1 \wedge \omega}}{f_2} &= \sum_{i=1}^3 x_i \dd x_{i+1} \wedge \dd x_{i+2}, \\  \frac{{\dd f_2 \wedge \omega}}{f_2} &= \sum_{i=1}^3 x^q_i \dd x_{i+1} \wedge \dd x_{i+2}. \end{align*}

This completes the proof of Theorem \ref{even} in the case $q>2$. Theorem \ref{evenmin} follows easily from freeness over $R^G$ and degree considerations: wedging distinct 1-forms of polynomial degrees in the set $\{1, q, q+1\}$ can never produce a form of polynomial degree $<q+1$. 

More concretely, the conclusion of Theorem 4.6 in loc. cit. is that $\Omega^G$ is a free twisted exterior $R^G$ algebra generated by $\dd f_1, \dd f_2, \omega$ under the twisted wedging operator $\curlywedge$ defined by
\[(-)\curlywedge (-) = \frac{(-) \wedge (-)}{f_2}.\]

\subsection{The odd case}\label{sec:odd}

In this section we assume $q$ is odd. Since a minimal generating set for $\Omega^G$ when $q=3$ was constructed in \cite{ElmerMeyer}, we also assume that $q>3$ throughout. Recall that if $q$ is odd then $R^G$ is a free module of rank 2 over $\mathcal{A}$. Our strategy is first to construct a $\mathcal{A}$-generating set of each summand $(\Omega^i)^G$ for $i=0,1,2,3$. We begin by showing:

\begin{Lemma}
$(\Omega^i)^G$ is a Cohen-Macaulay $R^G$ module for each $i$.
\end{Lemma}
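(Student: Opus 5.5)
The idea is to invoke the theorem of Broer and Chuai quoted in the introduction. It says that if $\codim_V(V^G)\leq 2$ — or more precisely, if the relevant codimension condition holds for every element of the group — then $(R \otimes W)^G$ is a Cohen-Macaulay $R^G$-module for every $\kk G$-module $W$. Ellingsrud–Skjelbred and Broer–Chuai state the hypothesis in terms of a Sylow-$p$-subgroup $P$, namely $\codim_V(V^P)\le 2$. So the plan is to verify this for $P=\{\tau_e : e\in F\}$ and then apply the result with $W=\Lambda^i(V^*)$ for $i=0,1,2,3$.

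\textbf{Computing the fixed space of $P$.} From the formula for $\rho(\tau_e)$, a vector $a v_1+bv_2+cv_3$ is fixed by $\tau_e$ exactly when
\[2eb+e^2c=0 \quad\text{and}\quad ec=0.\]
Taking any $e\neq 0$, this forces $c=0$, and then $2eb=0$ forces $b=0$ since $q$ is odd. Hence $V^P=\langle v_1\rangle$, which has codimension $2$. The same bound can be read off from the proof of Lemma \ref{norefs}, where the centraliser of a nontrivial unipotent element in $V$ was shown to be $1$-dimensional for $q$ odd. In either case $\codim_V(V^P)=2$.

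\textbf{Reducing to the Sylow subgroup, if needed.} If the cited statement of Broer–Chuai is phrased for $G$ itself, I would use the standard transfer argument, since the index $[G:P]$ is coprime to $p$. The relative transfer $\mathrm{Tr}^G_P$ splits $(R\otimes W)^G$ off $(R\otimes W)^P$ as an $R^G$-direct summand. Also, $R^P$ is finite over $R^G$, so depth can be computed over either ring. Together these show that Cohen–Macaulayness of $(R\otimes W)^P$ as an $R^P$-module transfers to $(R\otimes W)^G$ as an $R^G$-module. Applying this with $W=\Lambda^i(V^*)$, and using the identifications $(\Omega^1)^G\cong(R\otimes V^*)^G$ and $(\Omega^2)^G\cong(R\otimes V)^G$, gives the claim for $i=1,2$.

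\textbf{The cases $i=0,3$.} These are immediate and need no further argument. $(\Omega^0)^G=R^G$ is Cohen–Macaulay because it is a hypersurface by Theorem \ref{invars}. $(\Omega^3)^G$ is a free $R^G$-module of rank one, since $\rho(G)\subseteq\SL_3(F)$ by Lemma \ref{insl3}, so it is isomorphic to $R^G$.

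The only real obstacle is matching the exact hypothesis of the Broer–Chuai theorem, i.e.\ whether it is stated for $G$ or for $P$. The fixed-space computation above settles whichever form is required.
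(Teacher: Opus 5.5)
Your proposal is correct and takes essentially the same route as the paper. The paper notes $\dim V^P=1$ and applies Broer--Chuai's Proposition 6(ii) to get that $(R\otimes \Lambda^i(V^*))^P$ is a Cohen--Macaulay $R^P$-module. It then passes to $G$ via their Proposition 5, which plays the role of your transfer/direct-summand argument using that $[G:P]$ is prime to $p$. Your separate treatment of $i=0,3$ is harmless, but unnecessary, since the general argument already covers every $i$.
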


\begin{proof} We may write $\Omega^G = \bigoplus_{i=0}^3 (R \otimes W_i)^G$ where $W_i = \Lambda^i(V^*)$. Note that $\dim(V^P) =1$. By \cite[Proposition~6(ii)]{BroerChuaiRelative}, $(R \otimes W_i)^P$ is a Cohen-Macaulay $R^P$-module for each $i$, then by Proposition 5 (loc. cit.)  
$(R \otimes W_i)^G$ is a Cohen-Macaulay $R^G$-module for each $i$.
\end{proof}

It now follows that $(\Omega^i)^G$ is a free $\mathcal{A}$-module for each $i$. 

Free and Cohen-Macaulay modules of covariants were studied by the author in \cite{ElmerCMCov}, from where we obtain the following result, which will be a key tool in all that follows:

\begin{prop}\label{freetest} Let $A$ be a graded polynomial ring over $\kk$ with $A_0= \kk$ and let $M$ be a finitely generated free graded module over $A$. Let $m_1, m_2, \ldots, m_r$ be a $A$-independent set of elements of $M$, where $r = r(M,A)$. Then
\[\sum_{i=1}^r \deg(g_i) \geq s(M,A)\] with equality if and only if $M$ is generated by $m_1, \ldots, m_r$.
\end{prop}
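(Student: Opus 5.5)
We will prove this by a determinant argument against a fixed homogeneous basis of $M$. We read $\deg(g_i)$ in the statement as $\deg(m_i)$. The $m_i$ are taken to be homogeneous. The quantity $s(M,A)$ is the sum of the degrees of a homogeneous $A$-basis of $M$. This sum is well defined: $M/A_+M$ is a graded $\kk$-vector space whose graded dimension records the degrees of any homogeneous basis.

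Fix a homogeneous $A$-basis $b_1,\ldots,b_r$ of $M$ with degrees $d_1,\ldots,d_r$, so $s(M,A)=\sum_j d_j$. Expand each element as
\[m_i=\sum_{j=1}^r a_{ij}b_j, \qquad a_{ij}\in A.\]
Because $m_i$ and $b_j$ are homogeneous, we may take each $a_{ij}$ homogeneous of degree $\deg(m_i)-d_j$, with $a_{ij}=0$ whenever this is negative. Let $C=(a_{ij})$. Every term $\prod_i a_{i,\pi(i)}$ in the Leibniz expansion of $\det C$ is either zero or homogeneous of degree
\[\sum_i \deg(m_i)-\sum_i d_{\pi(i)}=\sum_i\deg(m_i)-s(M,A).\]
Hence $\det C$ is homogeneous of this degree.

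Next we use independence. $A$-independence of $m_1,\ldots,m_r$ means the rows of $C$ are linearly independent over $A$, and therefore over $\Quot(A)$. So $\det C\neq 0$. Since $A$ is a polynomial ring graded in nonnegative degrees with $A_0=\kk$, a nonzero homogeneous element has degree $\ge 0$. This gives $\sum_i\deg(m_i)\ge s(M,A)$.

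For the equality case we argue in both directions.
\begin{itemize}
\item Suppose equality holds. Then $\det C$ lies in $A_0=\kk$ and is nonzero, hence is a unit. So $C$ is invertible over $A$, each $b_j$ is an $A$-combination of the $m_i$, and the $m_i$ generate $M$.
\item Conversely, suppose the $m_i$ generate $M$. Writing $b_j=\sum_i c_{ji}m_i$ gives a matrix $D$ with $DC=I$, because the $b_j$ are a basis. Hence $\det C$ is a unit of $A$. The units of the polynomial ring $A$ are $\kk^*=A_0\setminus\{0\}$, so $\deg\det C=0$, and equality holds.
\end{itemize}

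The only delicate point is checking that $\det C$ is genuinely homogeneous of the claimed degree. This is exactly where the homogeneity of the $m_i$ and the positivity of the grading on $A$, with $A_0=\kk$, are used. Beyond that the argument is routine linear algebra over a graded domain.
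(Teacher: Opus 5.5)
Your proof is correct. The paper gives no proof of this proposition: it imports it from \cite{ElmerCMCov}. So there is no in-paper argument to match, and your determinant computation is a valid self-contained proof. You also correctly read $\deg(g_i)$ as $\deg(m_i)$ and took the $m_i$ homogeneous.

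One link should be made explicit. The paper defines $s(M,A)$ as the coefficient of $(t-1)$ in the expansion of $H(M,t)/H(A,t)$ about $t=1$, not as a basis-degree sum. With a homogeneous basis of degrees $d_1,\dots,d_r$ you have $H(M,t)=H(A,t)\sum_j t^{d_j}$. Hence $H(M,t)/H(A,t)=\sum_j t^{d_j}=r+\bigl(\sum_j d_j\bigr)(t-1)+\cdots$, which gives $s(M,A)=\sum_j d_j$, the quantity your argument uses.

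The two formulations serve different purposes. The Hilbert-series definition is what lets the paper compute $s$ from Hilbert-series data without knowing any basis. It does exactly this for $(\Omega^i)^G$ over $\mathcal{A}$, via the results of Broer--Chuai. Your route gives the sharper identity $\sum_i\deg(m_i)-s(M,A)=\deg\det C$. It also uses only that $A$ is a nonnegatively graded domain with $A_0=\kk$, not that $A$ is a polynomial ring.
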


In the above, $r(M,A)$ denotes the rank of $M$ as an $A$-module and $s(M,A)$ is the so-called {\it $s$-invariant}; this is defined to be the coefficient of $(t-1)$ in the expansion of $\frac{H(M,t)}{H(A,t)}$ about $t=1$.

We will need an explicit choice of $f_3$. We define 
 \begin{equation}\label{L} L:= \{(b,c): b \in F, c \in F^*, b^2-4c \ \text{not square}.\}.\end{equation}
As $q$ is odd, we may assume $\gamma = 0$ in the definition \eqref{g} of $g(x)$, and choose any $\delta \in F$ nonsquare. Now
consider the mapping $\Phi: F \times F^* \rightarrow F \times F^*$ defined by
\[\Phi(\alpha, \beta) = \left(\frac{-2 \alpha}{\beta}, \frac{\alpha^2-\delta}{\beta^2} \right).\]
We claim that:

\begin{Lemma}
The image of $\Phi$ is $L$. Moreover, every nonempty fibre of $\Phi$ has size two.
\end{Lemma}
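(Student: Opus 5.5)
Three things need checking: the image of $\Phi$ lies in $L$, every point of $L$ is attained, and each fibre has exactly two points. The idea is to read $(b,c)=\Phi(\alpha,\beta)$ as the coefficients of a monic quadratic whose roots can be written down explicitly.

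\emph{The image lies in $L$.} Put $b=-2\alpha/\beta$ and $c=(\alpha^2-\delta)/\beta^2$. Since $\delta$ is a nonsquare, $\alpha^2-\delta\neq 0$, so $c\in F^*$. The discriminant is
\[
b^2-4c=\frac{4\alpha^2-4\alpha^2+4\delta}{\beta^2}=\frac{4\delta}{\beta^2}.
\]
This is a nonsquare because $4/\beta^2$ is a nonzero square and $\delta$ is not. Hence $\Phi(\alpha,\beta)\in L$.

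\emph{Surjectivity onto $L$.} Take $(b,c)\in L$. Then $b^2-4c$ is a nonzero nonsquare; it is nonzero because $0$ is a square. In $F^*$ the nonsquares form a single coset of the squares, so the ratio $4\delta/(b^2-4c)$ is a nonzero square. Choose $\beta$ with $\beta^2=4\delta/(b^2-4c)$ and set $\alpha=-b\beta/2$, using that $q$ is odd so $2$ is invertible. Then $-2\alpha/\beta=b$ automatically. The second coordinate is
\[
\frac{\alpha^2-\delta}{\beta^2}=\frac{b^2}{4}-\frac{\delta}{\beta^2}=\frac{b^2}{4}-\frac{b^2-4c}{4}=c,
\]
so $\Phi(\alpha,\beta)=(b,c)$.

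\emph{Fibres have size two.} Suppose $\Phi(\alpha,\beta)=(b,c)$. The discriminant identity above gives $\beta^2=4\delta/(b^2-4c)$, which fixes $\beta$ up to sign. Once $\beta$ is chosen, $\alpha=-b\beta/2$ is determined. So the fibre is contained in $\{(\alpha,\beta),(-\alpha,-\beta)\}$, and both points lie in it since $\Phi(-\alpha,-\beta)=\Phi(\alpha,\beta)$. These two points are distinct because $\beta\neq -\beta$ when $q$ is odd and $\beta\neq 0$.

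The only step that needs a little care is surjectivity, specifically the fact that the quotient of two nonsquares in $F^*$ is a square. This holds because $F^*$ is cyclic of even order $q-1$. The rest is direct computation.

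As a consistency check, $|L|=(q-1)q/2$: it equals $|F\times F^*|/2$, and this also matches the count of monic irreducible quadratics with nonzero constant term, which are exactly the pairs in $L$. This agrees with $\deg f_3=\frac12 q(q-1)$.
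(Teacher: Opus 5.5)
Your proof is correct and follows essentially the same route as the paper: compute $b^2-4c=4\delta/\beta^2$ to land in $L$, then invert this to determine $\beta$ up to sign and $\alpha=-b\beta/2$. The paper writes $b^2-4c=\delta\epsilon^2$ and gets $\beta=\pm 2\epsilon^{-1}$, which relies on the same fact that a ratio of nonsquares is a square. You are slightly more careful than the paper, which only shows each fibre lies in a two-point set and leaves implicit that the candidates really map to $(b,c)$ (surjectivity) and that $c\neq 0$.
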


\begin{proof}
Let $\alpha \in F, \beta \in F^*$ and set $\Phi(\alpha, \beta) = (b,c)$. Then
\[b^2-4c = \frac{4 \delta}{\beta^2} = \left(\frac{2}{\beta}\right)^2 \delta\] is not a square in $F$, so $(b,c) \in L$.

Conversely suppose $(b,c) \in L$ and $(\alpha, \beta) \in \Phi^{-1}(b,c)$. Write $b^2-4c = \delta \epsilon^2$ for some $\epsilon \in F$, which is unique up to sign. Then
\[\delta \epsilon^2 = b^2-4c =  \left(\frac{2}{\beta}\right)^2 \delta\] which shows that
\[\beta = \pm 2 \epsilon^{-1},\]
and \[\alpha =\frac{- b \beta}{2} = \mp b \epsilon^{-1}.\]
\end{proof}

Now define \begin{equation}\label{f3} f_3 = \prod_{(b,c) \in L} (x_1+bx_2+cx_3).\end{equation} The Lemma above implies that $f_3^2 = \tilde{f_3}$. Moreover, for any $g \in G$ we have
\[f_3^2 = g \cdot f_3^2 = (g \cdot f_3)^2\] so that $g \cdot f_3 = \pm f_3$. But since there is no nontrivial character $\PSL_2(F) \rightarrow F^*$ we must have $g \cdot f_3= f_3$ for any $g \in G$ and hence $f_3 \in R^G$. We deduce that $f_3$ defined in this way has the properties listed in Proposition \ref{preinv}.

\begin{rem}
Smith \cite{Smith2x2} takes \eqref{f3} as the definition of $f_3$ and proves directly that $\{f_1,f_2,f_3\}$ is a homogeneous system of parameters for $R^G$. However, he also claims that that $f_3 \in R^{GL_2(F)}$ and concludes that $F[f_1,f_2,f_3] = R^{GL(F)}$. This is not true - in fact $R^{GL_2(F)}$ is a hypersurface and a free module of dimension 2 over $F[f_1,f_2,f_3^2]$ as observed in \cite{Maithani}.
\end{rem} 

We return to computing covariants. Again, $W_0$ and $W_3$ are trivial modules, so it is immediate that $$(\Omega^0)^G = \mathcal{A}(1,f_4)$$ and $$(\Omega^3)^G = \mathcal{A}( \dd x_1 \wedge \dd x_2 \wedge \dd x_3,f_4 \dd x_1 \wedge \dd x_2 \wedge \dd x_3).$$ It remains to find free generators for $(\Omega^i)^G = (R \otimes W_i)^G$ over $\mathcal{A}$ for $i=1,2$. 

As $\dim(V)=3$ we have $W_1 = V^*$ and $W_2 = \Lambda^2(V^*) \cong V$. Notice that $V \cong S^2(M)$ where $M$ is the natural 2-dimensional $\SL_2(F)$ module. By, for instance \cite{WildonMcDowell}, since $p>2$ we have $V \cong V^*$, so the modules $(\Omega^1)^G$ and $(\Omega^2)^G$ are isomorphic. We will need to know an explicit isomorphism. Define a map $\phi: W_2 \rightarrow W_1$ as follows:

\begin{align*}
\phi(\dd x_2 \wedge \dd x_3) &=  \dd x_3,\\
\phi(\dd x_3 \wedge \dd x_1) &=  -2\dd x_2,\\
\phi(\dd x_1 \wedge \dd x_2) &= \dd x_1.\\
\end{align*}

\begin{Lemma}\label{isom}
$\phi$ is an isomorphism.
\end{Lemma}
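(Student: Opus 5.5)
The proof will factor $\phi$ (up to sign) as a composite of two maps, each visibly $G$-equivariant and bijective. Direct verification is also possible: $\phi$ sends the basis $\dd x_2\wedge\dd x_3,\ \dd x_3\wedge\dd x_1,\ \dd x_1\wedge\dd x_2$ of $W_2$ to the basis $\dd x_3,\ \dd x_1$ of $W_1$ up to the nonzero scalars $1,-2,1$. Since $p$ is odd, $-2\neq 0$, so $\phi$ is a linear bijection. The real content is $G$-equivariance, which one could check on the generators $\tau_e$ and $\sigma$ of $G$. For $\sigma$ this is immediate: $\sigma$ swaps $x_1,x_3$ and negates $x_2$, and one sees at once that $\phi(\sigma\cdot(\dd x_2\wedge\dd x_3))=\dd x_1=\sigma\cdot\phi(\dd x_2\wedge \dd x_3)$, and similarly for the other two basis vectors. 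For $\tau_e$ it is a $3\times 3$ computation with $\rho(\tau_{-e})^{T}$. I would rather avoid that computation by the factorisation below.

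\emph{Step 1: $\Lambda^2(V^*)\cong V$.} Take the contraction isomorphism $\Lambda^2(V^*)\to \Lambda^3(V^*)\otimes V$, which sends $\dd x_2\wedge\dd x_3\mapsto v_1$, $\dd x_3\wedge \dd x_1\mapsto v_2$ and $\dd x_1\wedge\dd x_2\mapsto v_3$ after identifying $\Lambda^3(V^*)$ with $F$ via $\dd x_1\wedge\dd x_2\wedge\dd x_3$. This map is always $GL(V)$-equivariant once $\Lambda^3(V^*)$ carries its determinant character. By Lemma \ref{insl3} that character is trivial for $G$, so the map $\Lambda^2(V^*)\to V$ is an $FG$-isomorphism.

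\emph{Step 2: $V\cong V^*$.} Use the trace form $(A,B)\mapsto \tr(AB)$ on $V$. It is invariant under conjugation, hence $G$-invariant, so $A\mapsto \tr(A\,\cdot\,)$ is an $FG$-map $V\to V^*$. On the basis we have $\tr(v_1v_3)=-1$, $\tr(v_2^2)=2$ and all other pairings zero. Hence the map sends $v_1\mapsto -x_3$, $v_2\mapsto 2x_2$ and $v_3\mapsto -x_1$, and it is nondegenerate precisely because $2\neq0$ in $F$.

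\emph{Step 3: compose.} Composing Step 1 and Step 2 sends
\[\dd x_2\wedge\dd x_3\mapsto -\dd x_3,\qquad \dd x_3\wedge\dd x_1\mapsto 2\dd x_2,\qquad \dd x_1\wedge \dd x_2\mapsto -\dd x_1.\]
This is exactly $-\phi$, so $\phi$ is an $FG$-isomorphism.

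The only delicate points are sign conventions: the dual action $g\cdot f=f\circ g^{-1}$ and the orientation chosen in the contraction. I expect these to be the main place an error could creep in. As a safeguard, I would cross-check the final map against the direct computation for $\sigma$ done above, which agrees.
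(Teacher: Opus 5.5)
Your proof is correct, and it takes a different route from the paper. The paper's entire proof is ``This is easily verified.'' The intended argument is evidently a direct check that $\phi$ commutes with the generators $\tau_e$ and $\sigma$ of $G$ via the matrices $\rho(\tau_{-e})^{T}$ and $\rho(\sigma)$, with bijectivity clear because the scalars $1,-2,1$ are nonzero for $p$ odd.

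Your factorisation $\Lambda^2(V^*)\to V\to V^*$ is sound in every detail:
\begin{itemize}
\item The contraction $\Lambda^2(V^*)\cong \Lambda^3(V^*)\otimes V$ sends $\dd x_2\wedge\dd x_3,\ \dd x_3\wedge\dd x_1,\ \dd x_1\wedge\dd x_2$ to $v_1,v_2,v_3$, and the determinant twist is trivial by Lemma \ref{insl3}.
\item The trace pairings are $\tr(v_1v_3)=-1$ and $\tr(v_2^2)=2$, with all others zero.
\item The composite is exactly $-\phi$. I also checked the $\tau_e$ and $\sigma$ cases directly and they agree.
\end{itemize}

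What your approach buys is an explanation of where $\phi$ comes from and exactly which hypotheses it uses. Step 1 needs only $\rho(G)\subseteq\SL_3(F)$, and Step 2 needs $p$ odd for the trace form to be nondegenerate. (In characteristic $2$, $v_2=I$ lies in its radical.) It also gives a self-contained proof of $V\cong V^*$, which the paper instead quotes from the literature. The direct check is more mechanical, but it sidesteps the orientation and sign conventions you rightly flag as the risky part.

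One small slip: in your opening remark the target basis should read $\dd x_3,\ \dd x_2,\ \dd x_1$; you dropped $\dd x_2$.
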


\begin{proof}
This is easily verified.
\end{proof}


It follows that $\phi$ induces an isomorphism of $R^G$-modules $(\Omega^2)^G \rightarrow (\Omega^1)^G$. We will abuse notation slightly by denoting the induced map by $\phi$.

\begin{prop}
We have, for $i=1,2$,
\[r((\Omega^i)^G,\mathcal{A}) = 6\]
and
\[s((\Omega^i)^G,\mathcal{A}) = \frac32 q(q+1).\]
\end{prop}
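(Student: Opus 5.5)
The plan is to compute the Hilbert series of $(\Omega^i)^G=(R\otimes W_i)^G$ only up to its first two Laurent coefficients at $t=1$, and then divide by $H(\mathcal{A},t)$. Write $d_1=2$, $d_2=q+1$, $d_3=\frac12 q(q-1)$ for the degrees of $f_1,f_2,f_3$. Using
\[\frac{1}{1-t^{d}}=\frac{1}{d(1-t)}\Bigl(1+\tfrac{d-1}{2}(1-t)+O((1-t)^2)\Bigr),\]
we get
\[H(\mathcal{A},t)=\frac{1}{D(1-t)^3}\Bigl(1+\tfrac{S}{2}(1-t)+O((1-t)^2)\Bigr),\]
where $D=d_1d_2d_3=q(q^2-1)$ and $S=\sum(d_i-1)=\frac12 q(q+1)$. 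By \eqref{oddorder}, $D=2|G|$.

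The key input is the Laurent expansion of $H((R\otimes W)^G,t)$ for a module $W$ when $G$ contains no reflections. We claim that
\[H((R\otimes W)^G,t)=\frac{\dim W}{|G|}\cdot\frac{1}{(1-t)^3}+\frac{0}{(1-t)^2}+O\bigl((1-t)^{-1}\bigr).\]
That is, the leading coefficient is $\dim W/|G|$, and the second coefficient agrees with that of $\frac{\dim W}{|G|}H(R,t)=\frac{\dim W}{|G|}(1-t)^{-3}$, which is zero.

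The leading term is standard: $(R\otimes W)^G\otimes_{R^G}\Quot(R^G)\cong \Quot(R)^G\otimes W$ has rank $\dim W$ over $\Quot(R^G)$, by Galois descent. Since $R$ has rank $|G|$ over $R^G$, the leading coefficient is $\dim W/|G|$.

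The second coefficient is governed by the height-one primes of $R$ with nontrivial inertia group, which here means the reflecting hyperplanes. By Lemma \ref{norefs}(b) there are none. Concretely, I would argue that at every height-one prime $\mathfrak{p}$ of $R^G$ the extension $R^G_{\mathfrak{p}}\subseteq R_{\mathfrak{p}}$ is unramified, because the inertia groups are trivial. Hence $(R\otimes W)_{\mathfrak p}\cong R_{\mathfrak p}^G\otimes_{\kk}(\kk G\otimes W)$-type descent gives
\[(R\otimes W)^G_{\mathfrak p}\otimes_{R^G_{\mathfrak p}} R_{\mathfrak p}\cong R_{\mathfrak p}\otimes W.\]
The usual divisor/ramification argument, as in Benson's proof of the formula for $H(R^G,t)$, or the corresponding lemma in \cite{ElmerCMCov}, then shows that the $(1-t)^{-2}$ coefficient of $|G|\,H((R\otimes W)^G,t)$ equals that of $\dim W\cdot H(R,t)$. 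This step is the main obstacle. In the modular case one must make sure that the Benson-style argument uses only codimension-one ramification and not the non-modular averaging. I would check this first, or find it stated in \cite{ElmerCMCov} or \cite{BroerChuaiRelative}.

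Granting the claim, divide with $\dim W_i=3$. This gives
\[\frac{H((\Omega^i)^G,t)}{H(\mathcal{A},t)}=\frac{3D}{|G|}\Bigl(1-\tfrac{S}{2}(1-t)+O((1-t)^2)\Bigr)=6+3S\,(t-1)+O((t-1)^2).\]
Reading off the constant term gives $r((\Omega^i)^G,\mathcal{A})=6$. Reading off the coefficient of $(t-1)$ gives
\[s((\Omega^i)^G,\mathcal{A})=3S=\tfrac32 q(q+1).\]
As a consistency check, for $i=0$ the same computation gives $r=2$ and $s=S=\deg f_4$. This agrees with $R^G=\mathcal{A}\oplus\mathcal{A}f_4$, since $\deg\Jac(f_1,f_2,f_3)=\sum(d_i-1)$.
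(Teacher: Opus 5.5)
Your argument is correct. It differs from the paper's mainly in how the Laurent coefficients are organized.

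The paper passes through the intermediate ring $R^G$, using the transitivity rules of \cite{BroerChuaiRelative}, namely $r(M,\mathcal{A})=r(M,R^G)\,r(R^G,\mathcal{A})$ and $s(M,\mathcal{A})=r(M,R^G)\,s(R^G,\mathcal{A})+s(M,R^G)\,r(R^G,\mathcal{A})$. It feeds in $r(M,R^G)=3$ and $s(M,R^G)=0$ (Broer--Chuai's Theorem~1, since there are no reflections). It also uses $r(R^G,\mathcal{A})=2$ and $s(R^G,\mathcal{A})=\deg f_4$, both read off from $R^G=\mathcal{A}\oplus\mathcal{A}f_4$ (Proposition~\ref{preinv}(d)).

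You instead compare $H((\Omega^i)^G,t)$ and $H(\mathcal{A},t)$ directly with $H(R,t)=(1-t)^{-3}$. The rank $2$ then comes from $d_1d_2d_3=2|G|$, and the number $\sum(d_i-1)$ comes from expanding $H(\mathcal{A},t)$. So you never need Proposition~\ref{preinv}(d) or $f_4$, and your $i=0$ check recovers $\deg f_4$.

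The price is that your key claim is a little stronger than what the paper invokes. Vanishing of the $(1-t)^{-2}$ coefficient of $H((R\otimes W)^G,t)$ for all $W$ amounts to $s((R\otimes W)^G,R^G)=0$ \emph{plus} the same vanishing for $R^G$ itself. The latter is the case $W$ trivial, i.e.\ the Benson--Crawley-Boevey hyperplane formula. The paper replaces that second input by the explicit structure of $R^G$ over $\mathcal{A}$.

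Both facts hold in every characteristic, with the codimension-one-ramification argument you sketch rather than averaging. They apply here because Lemma~\ref{norefs}(b) excludes transvections as well as diagonalisable reflections. So the step you flagged as the main obstacle is closed by the same citation the paper uses, together with the hyperplane formula.

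One minor slip: the generic fibre is $(\Quot(R)\otimes W)^G$, not $\Quot(R)^G\otimes W$, although its dimension over $\Quot(R)^G$ is indeed $\dim W$.
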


\begin{proof}
By \cite[Lemma~1]{BroerChuaiRelative} and Proposition \ref{preinv}(d) we have
$$r((\Omega^i)^G,\mathcal{A}) = r((\Omega^i)^G,R^G)r(R^G,\mathcal{A}) = 3 \times 2 = 6.$$

Note that $$\deg(f_4) = \deg(f_1)+\deg(f_2)+\deg(f_3) - 3 = 2+(q+1)+\frac12q(q-1)-3= \frac12 q(q+1).$$ Therefore by \cite[Lemma~1, Theorem~1]{BroerChuaiRelative}, we have
\begin{align*}
s((\Omega^i)^G,\mathcal{A}) &= r((\Omega^i)^G,R^G)s(R^G,\mathcal{A})+s((\Omega^i)^G,R^G)r(R^G,\mathcal{A})\\
&= 3 \times \frac12 q(q+1) + 0 \times 2\\
&=  \frac32 q(q+1)
\end{align*}
with the zero on the second line coming from the fact that $G$ contains no reflections (Lemma \ref{norefs}).
\end{proof}

It follows that for both $i=1$ and $i=2$, it is enough to find a set of 6 elements of $\Omega^i(M)^G$ which are $\mathcal{A}$-independent and whose degrees sum to $\frac32 q(q+1).$

Three obvious elements of $(\Omega^1)^G$ are given by $b_1:=\dd f_1, b_2:=\dd f_2,b_3:=\dd f_3$ with degrees 1, $q$ and $\frac12q(q-1)-1$ respectively.  Notice that the coefficients of $b_i$ relative to the basis $\dd x_1,\dd x_2,\dd x_3$ of $(\Omega^1)$ are given by the rows of the Jacobian matrix $J = \Jac(f_1,f_2,f_3)$.

Three obvious elements of $(\Omega^2)^G$ are given by $c_1 = \dd f_2 \wedge \dd f_3, c_2= \dd f_3 \wedge \dd f_1, c_3= \dd f_1 \wedge \dd f_2$ with degrees $\frac12q (q+1)-1$, $\frac12 q(q-1)$ and $q+1$ respectively. Notice that the coefficients of $c_i$ relative to the  basis $\dd x_2 \wedge \dd x_3, \dd x_3 \wedge \dd x_1, \dd x_1 \wedge \dd x_2$ of $\Omega^2$ are given by the rows of the matrix of cofactors of $J$.

For each $i=1,2,3$ define
\[b_{i+3} = \phi(c_i)\]
and
\[c_{i+3} = \phi^{-1}(b_i).\]

We claim that $b_1,b_2, \ldots, b_6$ generate $(\Omega^1)^G$ over $\mathcal{A}$ and $c_1,c_2, \ldots, c_6$ generate $(\Omega^2)^G$ over $\mathcal{A}$. Notice that the degree sum of either is
\begin{align*}& 1+q+\frac12 q(q-1)-1+\frac12q (q+1)-1+\frac12 q(q-1)+q+1\\
&=q+ q(q-1)+\frac12q (q+1)+q\\
&=\frac32 q(q+1), \end{align*}
so it is enough to show these elements are $\mathcal{A}$-independent.

In order to do this, we will need to know the lead terms of $\frac{\partial f_3}{\partial x_i}$ for $i=1,2,3$.
\begin{Lemma}\label{leadterms}
 The lead term of  $\frac{\partial f_3}{\partial x_i}$ is:
\[\left\{ \begin{array}{lr} -\frac{q-1}{2}x_1^{\frac12 q(q-3)}x_2^{q-1} & i=1; \\
 \frac{q-1}{2}x_1^{\frac12 q(q-3)+1}x_2^{q-2} & i=2;\\
 -\frac{q-1}{8}x_1^{\frac12 q(q-3)+2}x_2^{q-3} & i=3. \end{array}\right.\]
\end{Lemma}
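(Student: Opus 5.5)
The approach is to exploit a basic property of grevlex with $x_1>x_2>x_3$: among monomials of a fixed degree, any monomial not divisible by $x_3$ is larger than every monomial divisible by $x_3$. So for a homogeneous $h\in R$ with $h|_{x_3=0}\neq 0$, we have $\LT(h)=\LT(h|_{x_3=0})$. Each $\partial f_3/\partial x_i$ is homogeneous, so it suffices to compute the restrictions $\frac{\partial f_3}{\partial x_i}\big|_{x_3=0}$ and check that they are nonzero. Once $x_3=0$ the remaining ordering is lexicographic in $x_1>x_2$.

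\emph{Step 1: the restriction of $f_3$.} From \eqref{f3}, $f_3|_{x_3=0}=\prod_{(b,c)\in L}(x_1+bx_2)$. For fixed $b\in F$, the number of $c$ with $(b,c)\in L$ is $(q-1)/2$: write $c=(b^2-d)/4$ with $d$ a nonsquare, and note $c\neq 0$ automatically. Hence
\[f_3|_{x_3=0}=\Big(\prod_{b\in F}(x_1+bx_2)\Big)^{\frac{q-1}{2}}=(x_1^q-x_1x_2^{q-1})^{\frac{q-1}{2}}.\]
Differentiation in $x_1$ or $x_2$ commutes with setting $x_3=0$. Using $\partial(x_1^q)/\partial x_1=0$ and $-(q-1)=1$ in $F$, we get
\[\frac{\partial f_3}{\partial x_1}\Big|_{x_3=0}=-\tfrac{q-1}{2}(x_1^q-x_1x_2^{q-1})^{\frac{q-3}{2}}x_2^{q-1},\qquad \frac{\partial f_3}{\partial x_2}\Big|_{x_3=0}=\tfrac{q-1}{2}(x_1^q-x_1x_2^{q-1})^{\frac{q-3}{2}}x_1x_2^{q-2}.\]
In each case the lex-leading term comes from taking $x_1^q$ in every factor of the power. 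This gives the first two claimed lead terms.

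\emph{Step 2: the case $i=3$.} This is the main obstacle, because setting $x_3=0$ does not commute with $\partial/\partial x_3$. Logarithmic differentiation gives
\[\frac{\partial f_3}{\partial x_3}\Big|_{x_3=0}=f_3|_{x_3=0}\sum_{(b,c)\in L}\frac{c}{x_1+bx_2}=(x_1^q-x_1x_2^{q-1})^{\frac{q-1}{2}}\sum_{b\in F}\frac{s_b}{x_1+bx_2},\qquad s_b=\sum_{c:(b,c)\in L}c.\]
Here $s_b=\frac14\big(\tfrac{q-1}{2}b^2-\sum_{d\ \text{nonsq}}d\big)$. For $q>3$ the nonsquares sum to $0$, since all of $F$ sums to $0$ and the nonzero squares also sum to $0$. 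Hence $s_b=-\tfrac18 b^2$. Next, write $x_1^q-x_1x_2^{q-1}=\prod_b(x_1+bx_2)$, so
\[\frac{\partial f_3}{\partial x_3}\Big|_{x_3=0}=-\tfrac18 (x_1^q-x_1x_2^{q-1})^{\frac{q-3}{2}}\,h,\qquad h=\sum_b b^2\prod_{b'\neq b}(x_1+b'x_2).\]
To find the lex-leading term of $h$, expand $\prod_{b'\neq b}(x_1+b'x_2)=\frac{x_1^q-x_1x_2^{q-1}}{x_1+bx_2}$ as a quotient. For $b\neq 0$ this is $\sum_{j=0}^{q-1}(-b)^jx_1^{q-1-j}x_2^j$ up to the $x_2^{q-1}$ correction, and for $b=0$ it is $x_1^{q-1}-x_2^{q-1}$. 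The coefficient of $x_1^{q-1-j}x_2^j$ in $h$ is therefore $(-1)^j\sum_b b^{j+2}$. By the standard power-sum identity, $\sum_{b\in F}b^k=0$ unless $(q-1)\mid k$ with $k>0$, in which case it equals $-1$. The first nonvanishing coefficient therefore occurs at $j=q-3$, giving $\LT(h)=(-1)^{q-3}(-1)x_1^2x_2^{q-3}=-x_1^2x_2^{q-3}$.

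Combining, the lead term of $\frac{\partial f_3}{\partial x_3}$ is $\tfrac18 x_1^{\frac12 q(q-3)+2}x_2^{q-3}$, and $\tfrac18=-\tfrac{q-1}{8}$ in $F$. The delicate points are the signs and the vanishing of the lower power sums; the hypothesis $q>3$ enters exactly there, and also in the sum of nonsquares. In writing the proof I would double-check both by hand for $q=5$.
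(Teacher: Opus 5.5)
Your proof is correct and follows the paper's argument. You reduce to $x_3=0$, and your explicit grevlex observation is a cleaner justification for this than the paper's. You then use $f_3|_{x_3=0}=(x_1^q-x_1x_2^{q-1})^{\frac{q-1}{2}}$ for $i=1,2$, and for $i=3$ use logarithmic differentiation with $s_b=\frac{q-1}{8}b^2$, coming from the vanishing of the sum of nonsquares. The only deviation is that you evaluate $\sum_b b^2\prod_{b'\neq b}(x_1+b'x_2)$ by power sums over $F$, whereas the paper uses $\sum_b (z+b)^{-1}=-(z^q-z)^{-1}$. Both give $-x_1^2x_2^{q-3}$, which is in fact the whole polynomial and not just its lead term. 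Also, $q>3$ is needed only for the nonsquare sum, not for the power-sum step.
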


\begin{proof}
Suppose first that $i=1$. Then $\frac{\partial f_3}{\partial x_1}$ is divisible by $x_3$ if and only if $f_3$ is so, if and only if $f_3(x_1,x_2,0) = 0$. We have
\[f_3(x_1,x_2,0) = \prod_{(b,c) \in L} (x_1+bx_2) = \prod_{b \in F} (x_1+bx_2)^{\frac12 (q-1)}\]
since for each $b \in F$ there are $\frac12(q-1)$ values of $c \in F^{*}$ with $b^2-4c$ not square. This can be written as
\[(x_1^{q}-x_1x_2^{q-1})^{\frac12(q-1)}\] which is clearly nonzero. This shows that $f_3$ is not divisible by $x_3$, and hence neither is $\frac{\partial f_3}{\partial x_1}$. Further, the lead term of $\frac{\partial f_3}{\partial x_1}$ is the lead term of
$\frac{\partial}{\partial x_1} f_3(x_1,x_2,0)$, since specialisation to $x_3=0$ and differentiation with respect to $x_1$ commute.
Now we have
\begin{align*}  \frac{\partial}{\partial x_1} (x_1^{q}-x_1x_2^{q-1})^{\frac12(q-1)}
=& -\frac12(q-1)x_2^{q-1}(x_1^q-x_1x_2^{q-1})^{\frac12(q-3)}\\
=& -\frac12(q-1) x_2^{q-1}x_1^{\frac12 q(q-3)}+\ \text{smaller terms}
\end{align*}
as required.

 The proof for $i=2$ is similar and left to the reader. For $i=3$ the proof is quite different, since differentiation with respect to $x_3$ does not commute with evaluation at $x_3=0$.

Viewed as a polynomial in $x_3$ alone, the coefficient of $x_3$ in $f_3$ is
\[\sum_{(b,c) \in L} c \prod_{(b',c') \in L \setminus \{(b,c)\} } (x_1+b'x_2).\]
Provided this is not zero, it is the lead term of $\frac{\partial f_3}{\partial x_3}$. We can rewrite it as
\[f_3(x_1,x_2,0) \sum_{(b,c) \in L}\frac{c}{x_1+bx_2}.\]
Let
\[S(x_1,x_2) =  \sum_{(b,c) \in L}\frac{c}{x_1+bx_2} = \sum_{b \in F} \frac{A_b}{x_1+bx_2}\] where 
\[A_b = \sum_{c \in F^*, b^2-4c \in Z} c\] where $Z$ is the set of non-squares in $F$. We can write this as
\[\sum_{d \in Z} \frac14 (b^2-d).\]
Now since $q>3$ it is well known that $\sum_{d \in Z} d = 0$, see for instance \cite[Section~5.1]{LidlNiederreiter}.

Hence
\[A_b =\frac{q-1}{8} b^2\]
and therefore
\[S(x_1,x_2) = \frac{q-1}{8} \sum_{b \in F} \frac{b^2}{x_1+bx_2}.\]
Now we have
\begin{align*}
\sum_{b \in F} \frac{b^2}{x_1+bx_2} &= \frac{(b-\frac{x_1}{x_2})(b+\frac{x_1}{x_2})+ \frac{x_1^2}{x_2^2}}{x_1+bx_2}\\
&= \frac{1}{x_2} \sum_{b \in F} (b-\frac{x_1}{x_2}) + \frac{x_1^2}{x_2^3} \sum_{b \in F} \frac{1}{b+ \frac{x_1}{x_2}}\\
&= \frac{x_1^2}{x_2^3} \sum_{b \in F} \frac{1}{b+ \frac{x_1}{x_2}}  \end{align*}
To evaluate this, recall that 
\[F(z):= \prod_{b \in F} (z+b) = z^q-z.\]
Differentiating both sides we get
\[F'(z) = \sum_{b \in F} \sum_{b' \in F, b' \not b} (z+b) = -1.\]
Hence
\[\sum_{b \in F} \frac{1}{z+b} = \frac{F'(z)}{F(z)} = -\frac{1}{z^q-z}.\]
From this we deduce that
\[S(x_1,x_2) = -\frac{q-1}{8}\frac{x_1^2}{x_2^3} \frac{1}{\frac{x_1^q}{x_2^q}-\frac{x_1}{x_2}} =  - \frac{q-1}{8}x_1^2x_2^{q-3} \frac{1}{x_1^q-x_1x_2^q}.\]
Hence
\[f_3(x_1,x_2,0)S(x_1,x_2) = -\frac{q-1}{8} x_1^2 x_2^{q-3} (x_1^q-x_1x_2^{q-1})^{\frac{p-3}{2}}\]
with lead term
\[-\frac{q-1}{8} x_1^{q(q-3)+2}x_2^{q-3}\]
as desired.
\end{proof}

We can use this result to find the lead terms of the coefficients of $c_1,c_2, c_3$, which we display in the table below:

\vspace{1cm}
\begin{tabular}{|c||c|c|c|} \hline
 & $\dd x_2 \wedge \dd x_3$ &  $\dd x_3 \wedge \dd x_1$ &  $\dd x_1 \wedge \dd x_2$ \\ \hline \hline
$c_1$ & $-\frac{q-1}{2}x_1^{\frac{1}{2}q(q-1)+1}x_2^{q-2}$ & $ -\frac{q-1}{2}x_1^{\frac{1}{2}q(q-1)}x_2^{q-1}$ &  $x_1^{\frac{1}{2}q(q-3)}x_2^{2q-1}$\\ \hline
$c_2$ & $\frac{q-1}{4} x_1^{\frac12 q(q-3)+2}x_2^{q-2}$ & $\frac{q-1}{2} x_1^{\frac12 q(q-3)+1}x_2^{q-1}$ &  $x_1^{\frac12 q(q-3)}x_2^{q}$\\  \hline
 $c_3$ & $ -2x_1^qx_2$ & $-x_1^qx_3$ & $-2x_2^qx_3$ \\ \hline
\end{tabular}
\vspace{1cm}

(the entries on the the $c_3$ row are the exact coefficients.) Now we can use this table and $\phi$ to find the lead terms of the coefficients of $b_4,b_5,b_6$; we include them in the table below along with the lead terms of the coefficients of $b_1,b_2,b_3$.

\vspace{1cm}
\begin{tabular}{|c||c|c|c|} \hline
 & $\dd x_1$ &  $\dd x_2$ &  $\dd x_3$ \\ \hline \hline
$b_1$ & $x_3$ & $-2x_2$ & $x_1$\\ \hline
$b_2$ & $x_3^q$ & $-2x_2^q$ & $x_1^q$\\ \hline
$b_3$ & $-\frac{q-1}{2}x_1^{\frac12 q(q-3)}x_2^{q-1}$ & $\frac{q-1}{2}x_1^{\frac12 q(q-3)+1}x_2^{q-2}$ & $-\frac{q-1}{8}x_1^{\frac12 q(q-3)+2}x_2^{q-3}$\\ \hline
$b_4$ &  $x_1^{\frac{1}{2}q(q-3)}x_2^{2q-1}$ & $ -x_1^{\frac{1}{2}q(q-1)}x_2^{q-1}$   & $-\frac{q-1}{2}x_1^{\frac{1}{2}q(q-1)+1}x_2^{q-2}$\\ \hline
$b_5$ &  $x_1^{\frac12 q(q-3)}x_2^{q}$& $ x_1^{\frac12 q(q-3)+1}x_2^{q-1}$   & $\frac{q-1}{4} x_1^{\frac12 q(q-3)+2}x_2^{q-2}$\\ \hline
 $b_6$ & $-2x_2^qx_3$  & $2x_1^qx_3$ & $ -2x_1^qx_2$ \\ \hline
\end{tabular}
\vspace{1cm}

\begin{prop} The 1-forms $b_1,b_2,b_3, \ldots, b_6$ are $\mathcal{A}$-indepedent and generate $(\Omega^1)^G$.
\end{prop}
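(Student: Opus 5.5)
By the degree count preceding the statement, the six forms have total degree $\frac32 q(q+1) = s((\Omega^1)^G,\mathcal{A})$. Since $(\Omega^1)^G$ is a free $\mathcal{A}$-module of rank $6$, Proposition \ref{freetest} shows that it suffices to prove that $b_1,\ldots,b_6$ are $\mathcal{A}$-independent. Generation then follows from the equality case.

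For independence I would work over $R^G$ rather than over $\mathcal{A}$. Write $R^G = \mathcal{A}\oplus\mathcal{A}f_4$. Suppose there is a relation $\sum_{i=1}^6 a_i b_i = 0$ with $a_i\in\mathcal{A}$, not all zero. The point is that $b_{i+3}$ is, up to $\phi$, the cofactor row $c_i$ of $J=\Jac(f_1,f_2,f_3)$. I expect an identity of the form
\[\sum_{j=1}^3 \mu_j\, b_{j+3} = f_4\cdot(\text{an $F$-linear combination of } b_1,b_2,b_3)\]
to fail. Instead, the natural approach is a rank argument. Take the $6\times 3$ coefficient matrix $B$ of $b_1,\ldots,b_6$ with respect to $\dd x_1,\dd x_2,\dd x_3$. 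Its top block is $J$, and its bottom block is $\operatorname{adj}(J)^T P$, where $P$ is the fixed invertible matrix of $\phi$. If the relation is split as $u^TJ + v^T\operatorname{adj}(J)^TP = 0$ with $u,v\in\mathcal{A}^3$, then multiplying on the right by $P^{-1}J^T$ gives
\[u^T J P^{-1} J^T + f_4\, v^T = 0.\]
Here $\operatorname{adj}(J)^T J^T = \det(J) I = \pm f_4 I$.

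Now $JP^{-1}J^T$ is a symmetric matrix whose $(i,j)$ entry is $\langle \dd f_i,\dd f_j\rangle$ under the $G$-invariant bilinear form on $V^*$ induced by $\phi$. These entries are invariants, so they lie in $R^G = \mathcal{A}\oplus \mathcal{A}f_4$. Comparing the $\mathcal{A}$- and $\mathcal{A}f_4$-components in the displayed relation then gives two equations over $\mathcal{A}$. If I can show that the $\mathcal{A}$-parts of these pairings form an invertible matrix over $\Quot(\mathcal{A})$, then $u=0$, hence $v=0$, and the argument is finished. These $\mathcal{A}$-parts are the entries with no $f_4$ component, since $f_4$ has degree exceeding most pairings. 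For instance, $\langle \dd f_1,\dd f_1\rangle$ is a nonzero multiple of $f_1$ and $\langle \dd f_1,\dd f_2\rangle$ is a multiple of $f_2$.

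This reduction may be delicate. A more robust fallback uses the lead-term table directly. For each $b_i$, take the leading monomial of each coefficient in grevlex order, and multiply $b_i$ by monomials in $\LT(f_1)=-x_2^2$, $\LT(f_2)=-2x_2^{q+1}$ and $\LT(f_3)$. I would then show that the resulting leading terms of the products $a_i b_i$ in the three components $\dd x_1,\dd x_2,\dd x_3$ can never all cancel. The approach is to look at the maximal monomial among the $\LT(a_i)\LT(b_i)_{\dd x_j}$ and track the $x_3$-degree and $x_1$-degree; rows $b_1,b_2,b_6$ carry $x_3$ while $b_3,b_4,b_5$ do not.

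The main obstacle is this cancellation analysis. Several rows, for instance $b_3$ and $b_5$, have lead terms in adjacent $x_1,x_2$ exponent patterns, so two-term cancellations can occur. In that case I would pass to the $3\times3$ minors of the lead-term matrix, showing that some minor of $B$ restricted to suitable rows has a nonzero lead term. That would give independence over $R$ of triples, and the $\mathcal{A}$-structure would then handle the remaining rows.
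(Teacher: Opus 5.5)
\textbf{There is a genuine gap: the proposal never proves that $b_1,\dots,b_6$ are $\mathcal{A}$-independent.}

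Your opening reduction via Proposition~\ref{freetest} matches the paper. Your Gram-matrix reduction is also sound. A relation $u^TJ+v^T\operatorname{adj}(J)^TP=0$ with $u,v\in\mathcal{A}^3$ is equivalent, after multiplying by the invertible matrix $P^{-1}J^T$, to $u^TM+f_4v^T=0$. Here $M=JP^{-1}J^T$ has entries in $R^G$, because $M_{ij}\,\dd x_1\wedge\dd x_2\wedge\dd x_3=\phi^{-1}(\dd f_i)\wedge\dd f_j$. Splitting $M=M_0+f_4M_1$ over $\mathcal{A}$, the relation becomes $u^TM_0=0$ and $v^T=-u^TM_1$. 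So $\mathcal{A}$-independence is in fact \emph{equivalent} to $\det M_0\neq0$.

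That is exactly the step you leave open (``if I can show \dots'', ``may be delicate''). The fallback does not fill the gap. The cancellation analysis is never carried out, and its last step is not a valid strategy. Since $b_1,\dots,b_6$ span a space of rank $3$ over $\Quot(R)$, $R$-independence of a triple says nothing about $\mathcal{A}$-relations among all six; for example, independence of $b_1,b_2,b_3$ is just $\det J\neq0$. The phrase ``the $\mathcal{A}$-structure would then handle the remaining rows'' is the whole content of the proposition.

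\textbf{How to close it.} Your first route becomes a complete proof with one more observation. First, $\det M=\det(P)^{-1}f_4^2$. Second, $\deg M_{ij}=\deg f_i+\deg f_j-2$. For $q\geq5$, every entry except $M_{33}$ (of degree $q(q-1)-2$) has degree less than $\deg f_4=\frac12q(q+1)$, so it lies in $\mathcal{A}$. Hence $M_1=\mu E_{33}$ for some $\mu\in\mathcal{A}$, where $E_{33}$ is a matrix unit, and
\[\det M=\det M_0+\mu f_4\,(M_{11}M_{22}-M_{12}^2),\]
with both $\det M_0$ and $\mu(M_{11}M_{22}-M_{12}^2)$ in $\mathcal{A}$. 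Write $f_4^2=\alpha+\beta f_4$ with $\alpha,\beta\in\mathcal{A}$. Comparing $\mathcal{A}$-components gives $\det M_0=\det(P)^{-1}\alpha$. Moreover $\alpha\neq0$, since otherwise $f_4(f_4-\beta)=0$ would force $f_4=\beta\in\mathcal{A}$. Thus $u=0$, and then $v=0$.

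\textbf{Comparison with the paper.} This is genuinely different from the paper's argument, which looks only at the $\dd x_3$-coefficients and shows that the $(x_1,x_2)$-bidegrees of $\LT(b_{3,i})$ are pairwise distinct modulo $(N,2)$. The Gram-matrix argument has three advantages.
\begin{enumerate}
\item It needs no lead-term data; in particular it does not use Lemma~\ref{leadterms}.
\item It works verbatim for $q=3$, with $E_{22}$ in place of $E_{33}$.
\item It avoids the cancellation problem you anticipated, which is real. The paper's premise that every element of $\mathcal{A}$ has lead term $x_1^{Nk}x_2^{2l}$ fails for sums: for example, $f_2+2(-1)^{(q+1)/2}f_1^{(q+1)/2}$ has lead term $x_1^qx_3$.
\end{enumerate}
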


\begin{proof}
Suppose $\sum_{i=1}^6 a_ib_i=0$. If this relation is to hold and not all $a_i$ are zero, there must be a pair $i,j$ such that
\[\LT(a_i)\LT(b_{3,i})= \LT(a_j)\LT(b_{3,j})\] where $b_{3,i}$ is the coefficient of $\dd x_3$ in $b_i$. Now recall that every element of $\mathcal{A}$ has lead term $x_1^{Nk}x_2^{2l}$ where $N = \frac12q(q-1)$. Since $q \geq 5$ we have $N>2q-2$. Now we see by inspection that the $(x_1,x_2)$-bidegrees of the lead terms of $b_{3,i}$ for $i=1, \ldots, 6$ are distinct modulo $(N,2)$. Therefore the set is $\mathcal{A}$-independent as required. The generation follows on applying Proposition \ref{freetest} because $\sum_{i=1}^6 \deg(b_i) = \frac32 q(q+1) = s(\Omega^1(M)^G, \mathcal{A})$.
\end{proof}

\begin{cor}
The 2-forms $c_1,c_2, \ldots, c_6$ are $\mathcal{A}$-independent and generate $(\Omega^2)^G$.
\end{cor}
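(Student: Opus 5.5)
The plan is to transport the preceding Proposition across the isomorphism $\phi$. By Lemma \ref{isom}, $\phi: W_2 \to W_1$ is an isomorphism of $FG$-modules. Extending it $R$-linearly gives a $G$-equivariant isomorphism $R \otimes W_2 \to R \otimes W_1$ that preserves polynomial degree. On fixed points it therefore restricts to an isomorphism of graded $R^G$-modules, and a fortiori of graded $\mathcal{A}$-modules, $\phi:(\Omega^2)^G \to (\Omega^1)^G$. The same holds for its inverse.

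The next step is to compute the images of the $c_i$ under $\phi$. By definition, $\phi(c_i) = b_{i+3}$ for $i=1,2,3$. Also $\phi(c_{i+3}) = \phi(\phi^{-1}(b_i)) = b_i$ for $i=1,2,3$. So $\phi$ maps the set $\{c_1,\ldots,c_6\}$ bijectively onto $\{b_1,\ldots,b_6\}$. The identification is $c_{i}\mapsto b_{i+3}$ and $c_{i+3}\mapsto b_i$.

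Now suppose $\sum_{i=1}^6 a_i c_i = 0$ with $a_i \in \mathcal{A}$. Applying the $\mathcal{A}$-linear map $\phi$ gives a linear relation among the $b_j$ with the same coefficients, permuted. By the preceding Proposition the $b_j$ are $\mathcal{A}$-independent, so every $a_i = 0$. For generation, take any $\eta \in (\Omega^2)^G$. Then $\phi(\eta) \in (\Omega^1)^G$, so $\phi(\eta)=\sum_j a_j b_j$ with $a_j \in \mathcal{A}$, again by the preceding Proposition. Applying $\phi^{-1}$ expresses $\eta$ as an $\mathcal{A}$-combination of the $c_i$.

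As a cross-check, one could argue via Proposition \ref{freetest} instead. The $c_i$ have the same degree multiset as the $b_i$, summing to $\frac32 q(q+1) = s((\Omega^2)^G,\mathcal{A})$, so independence alone would already give generation.

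There is no real obstacle. The only point needing care is that $\phi$ is $G$-equivariant, which is exactly the content of Lemma \ref{isom}, and that its $R$-linear extension preserves degree. The latter holds because $\phi$ acts only on the exterior factor.
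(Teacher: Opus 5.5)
Your proposal is correct and is essentially the paper's argument: the paper also transports the $\mathcal{A}$-independence and generation of $b_1,\dots,b_6$ across the $\mathcal{A}$-module isomorphism $\phi^{-1}:(\Omega^1)^G\to(\Omega^2)^G$. Your bookkeeping $c_i\mapsto b_{i+3}$, $c_{i+3}\mapsto b_i$ is more accurate than the paper's phrase ``$c_i=\phi^{-1}(b_i)$ for all $i=1,\ldots,6$'', which as written holds only for $i=4,5,6$; the index permutation does not affect the argument.
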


\begin{proof}
This is immediate, since $\phi^{-1}: (\Omega^1)^G \rightarrow (\Omega^2)^G$ is a $R^G$-module isomorphism, hence a $\mathcal{A}$-module isomorphism, and $c_i = \phi^{-1}(b_i)$ for all $i=1, \ldots, 6$.
\end{proof}

This completes the proof of Theorem \ref{odd} in the case $q \neq 3$.

\begin{proof}[Proof of Corollary \ref{oddmin}]
It is clear that the set $\{f_1,f_2,f_3\} \cup S$ generates $\Omega^G$ and that $c_1,c_2,c_3$ and $f_4 \dd x_1 \wedge \dd x_2 \wedge \dd x_3$ can be omitted. It is clear that $\dd x_1 \wedge \dd x_2 \wedge \dd x_3$ cannot be omitted. Since $b_1, \ldots, b_6$ generate freely over $F[f_1,f_2,f_3]$ and each has $x_1$-degree strictly smaller than the $x_1$-degree of $f_4$, none of these can be ommited. It remains to show that none of $c_4,c_5, c_6$ can be omitted.

The lead terms of coefficients of these $2$-forms are given in the table below:

\vspace{1cm}
\begin{tabular}{|c||c|c|c|} \hline
 & $\dd x_2 \wedge \dd x_3$ &  $\dd x_3 \wedge \dd x_1$ &  $\dd x_1 \wedge \dd x_2$ \\ \hline \hline
$c_4$  & $x_1$ & $x_2$  & $x_3$\\ \hline
$c_5$  &  $x_1^q$ &  $x_2^q$ & $x_3^q$\\ \hline
$c_6$  &  $-\frac{q-1}{8}x_1^{\frac12 q(q-3)+2}x_2^{q-3}$ & $-\frac{q-1}{4}x_1^{\frac12 q(q-3)+1}x_2^{q-2}$  & $-\frac{q-1}{2}x_1^{\frac12 q(q-3)}x_2^{q-1}$\\ \hline
\end{tabular}
\vspace{1cm}

Again each has $x_1$-degree strictly smaller than that of $f_4$, so we see that $\{c_1,c_2, \ldots, c_6\}$ is a minimal generating set for $(\Omega^2)^G$ over $R^G$. It remains to show that none of $c_4,c_5,c_6$ lie in the $\mathcal{A}$-submodule of $\Omega^2$ generated by wedge products of $b_4,b_5, b_6$. But any such wedge product has $x_1$-degree at least $\frac12 q(q-1)$ which is greater than the $x_1$-degree of any of these.
 \end{proof}

\subsection{The missing case: q=3}\label{sec:three}

Our proof of Theorem \ref{odd} is invalid if $q=3$, since it relies on the fact that the sum of the nonsquares in $F$ is zero. However, a minimal generating set for $\Omega^G$ when $q=3$ is given in \cite{ElmerMeyer}. In this section we will verify that Theorem \ref{odd} and Corollary \ref{oddmin} also hold when $q=3$.

In \cite{ElmerMeyer} we worked with respect to a different basis of $V$:
\[v_1 = \begin{pmatrix} 0 & 1 \\ -1 & 0 \end{pmatrix}, v_2 =  \begin{pmatrix} -1 & -1 \\ -1 & 1 \end{pmatrix}, v_3 =  \begin{pmatrix} 1 & -1 \\ -1 & -1 \end{pmatrix} \] with $x_1,x_2, x_3$ the dual basis as usual. This choice was made in order to exploit the fact that in this case $V$ is induced from a non-modular representation of a subgroup of $G$. An additional benefit is the observation that in this basis, $\rho(g)^{-T}= \rho(g)$ for all $g \in G$; consequently the actions of $G$ on $V$ and $V^*$ are given by the same matrices, and the map $\phi$ from the present article  with respect to this basis is simply 
\begin{align*}
\phi(\dd x_2 \wedge \dd x_3) &=  \dd x_1;\\
\phi(\dd x_3 \wedge \dd x_1) &=  \dd x_2;\\
\phi(\dd x_1 \wedge \dd x_2) &= \dd x_3.
\end{align*}

Generators of a subalgebra $\mathcal{A} \subseteq R^G$ with respect to this basis were given in \cite[Proposition~4]{ElmerMeyer} as
\begin{align*} a_1 &= x_1^2+x_2^2+x_3^2;\\
a_2 &= x_1x_2x_3;\\
a_3 &= x_1^4+x_2^4+x_3^4.
\end{align*}
It is easy to see that $a_1$ is the determinant of generic trace-free matrix, and that $a_3=-\mathcal{P}^1(a_1)$. Moreover Theorem \ref{invars} implies that $f_2$ is the unique invariant of degree 3 up to a constant, so this must be proportional to $a_2$ in our new basis. The nontrivial secondary generator of $R^G$ given in \cite{ElmerMeyer} is
\[b: = x_1^4x_2^2+x_1^2x_3^4+x_2^4x_3^2.\]
This is not $\Jac(a_1,a_2,a_3)$ but rather $a_1^3-a_1a_3-\Jac(a_1,a_2,a_3)$, so we can certainly replace it by $\Jac(a_1,a_2,a_3)$. Therefore the generating sets given for $\Omega^0$ and $\Omega^3$ are equivalent in the two sources.

Corollary 8 in \cite{ElmerMeyer} states, after unifying our notation:
\begin{itemize}
\item[(a)] $(\Omega^1)^G$ is generated as an $\mathcal{A}$-module by the set
\begin{align*} \{c_1 &= x_1 \dd x_1+x_2\dd x_2+x_3\dd x_3,\\
c_2 &= x_2x_3\dd x_1+x_3x_1\dd x_2+x_1x_2\dd x_3,\\
c_3 &= x_1^3\dd x_1+x_2^3\dd x_2+x_3^3\dd x_3,\\
c_4 &=x_1x_2^2\dd x_1+x_2x_3^2\dd x_2+x_3x_1^2\dd x_3,\\
c_5 &= x_2^3x_3\dd x_1+x_3^3x_1\dd x_2+x_1^3x_2\dd x_3,\\
c_6 &= x_1^3x_2^2\dd x_1+x_2^3x_3^2\dd x_2+x_3^3x_1^2\dd x_3\}.\end{align*}
\item[(b)]  $(\Omega^2)^G$ is generated as an $A$-module by the set
\begin{align*} \{d_1 &= x_1\dd x_2\dd x_3+x_2\dd x_3\dd x_1+x_3\dd x_1\dd x_2,\\ 
d_2 &= x_2x_3\dd x_2\dd x_3+x_3x_1\dd x_3\dd x_1+x_1x_2\dd x_1\dd x_2,\\
d_3 &= x_1^3\dd x_2\dd x_3+x_2^3\dd x_3\dd x_1+x_3^3\dd x_1\dd x_2,\\
d_4 &=x_1x_2^2\dd x_2\dd x_3+x_2x_3^2\dd x_3\dd x_1+x_3x_1^2\dd x_1\dd x_2,\\
d_5 &= x_2^3x_3\dd x_2\dd x_3+x_3^3x_1\dd x_3\dd x_1+x_1^3x_2\dd x_1\dd x_2,\\
d_6 &= x_1^3x_2^2\dd x_2\dd x_3+x_2^3x_3^2\dd x_3\dd x_1+x_3^3x_1^2\dd x_1\dd x_2\}.\end{align*}
\end{itemize}

We see easily that, up to sign, $c_i = \dd a_i$ for $i=1,2,3$, and that $\phi(d_i)=c_i$ for each $i$. Moreover, the relations given in the proof of Theorem 9 in \cite{ElmerMeyer} confirm that $d_4,d_5,d_6$ may be replaced by $b_1 \wedge b_2, b_3 \wedge b_1$ and $b_2 \wedge b_3$ respectively, and so $c_4,c_5,c_6$ may be replaced by $\phi^{-1}(b_1 \wedge b_2), \phi^{-1}(b_3 \wedge b_1)$ and $\phi^{-1}(b_2 \wedge b_3)$. This completes the proof of Theorem \ref{odd} in the case $q=3$. Corollary \ref{oddmin} now follows with exactly the same proof.

\bibliographystyle{plain}
\bibliography{MyBib}

\end{document}